\newif\ifpdf
\theoremstyle{plain}
\newtheorem{theorem}{Theorem}[section]
\newtheorem{proposition}[theorem]{Proposition}
\newtheorem{corollary}[theorem]{Corollary}
\theoremstyle{definition}
\newtheorem{definition}[theorem]{Definition}
\theoremstyle{remark}
\numberwithin{equation}{section}
\begin{document}

\ifpdf
\DeclareGraphicsExtensions{.pdf, .jpg, .tif}
\else
\DeclareGraphicsExtensions{.eps, .jpg}
\fi

\title{Thompson's Group F and Uniformly Finite Homology}
\author{D. Staley}
\address {Department of Mathematics\\
    Rutgers University,
    Piscataway, NJ 08854-8019}
\email{staley@math.rutgers.edu}




\date{\today}

\large

\begin{abstract}
This paper demonstrates the uniformly finite homology developed by Block and Weinberger and its relationship to amenable spaces via applications to the Cayley graph of Thompson's Group $F$.  In particular, a certain class of subgraph of $F$ is shown to be non-amenable (in the F{\o}lner sense).  This shows that if $F$ is amenable, these subsets (which include every finitely generated submonoid of the positive monoid of $F$) must necessarily have measure zero.
\end{abstract}

\maketitle

\section{Introduction} \label{S:intro}

Richard Thompson introduced his group $F$ in 1965, and it has since been extensively studied.  $F$ is finitely presented, has exponential growth, and its abelianization is $\mathbb{Z} \times \mathbb{Z}$.  The question as to whether $F$ is amenable was first posed in 1979.  $F$ is, in a sense, ``on the edge of amenability'', as it is not elementary amenable but does not contain a free subgroup on two generators \cite{BS85}.  For several years it was hoped to provide the first finitely-presented counterexample to the Von Neumann conjecture, until Ol'shanskii and Sapir provided a different counterexample in 2000 \cite{OS02}.

F{\o}lner provided a geometric criterion for the amenability of a group in 1955 \cite{FO55}, based on the existence of subsets of the Cayley graph with arbitrarily small ``boundary''.  This criterion holds for semigroups as well (one may find a proof in \cite{NA64}.  This criterion allows one to extend the definition of ``amenable'' to cover arbitrary graphs.  In 1992, Block and Weinberger extended the definition to an even broader class of metric spaces.  They defined the {\em uniformly finite homology groups} $H^{uf}_n(M)$ of a metric space $M$, and proved that a space $M$ was amenable if and only if $H^{uf}_0(M) \neq 0$.  This paper seeks to apply the results of Block and Weinberger to Thompson's Group $F$.

The results of Block and Weinberger center around the existence of ``Ponzi schemes'', which come from the uniformly finite 1-chains.  In this paper we will be looking only at graphs, and we will define a slightly simpler notion of a ``Ponzi flow'' which works for our purposes.

In Section 2, we give a very brief overview of Thompson's Group $F$.  Readers interested in a more in-depth introduction are referred to \cite{jB04} or \cite{CF96}.  We also will define amenability and F{\o}lner's criterion.  In Section 3, we discuss the results of Block and Weinberger and define Ponzi flows, as well as proving certain results about Ponzi flows on subgraphs of Cayley graphs (namely, a subgraph with a Ponzi flow always has measure zero).  In Section 4, we state and prove the main result:

\begin{theorem} \label {T:main}
Let $k,l$ be positive integers, with $l > 0$.  Let $\Gamma_k^l$ be the subgraph of the Cayley Graph of $F$ induced by vertices which can be expressed in the form $wv$, where $w$ is a positive word in the infinite generating set $\{x_0, x_1, x_2, ...\}$ of length $\leq k$ and $v$ is a positive word in $\{x_0, ..., x_l\}$ (of any length).  Then $\Gamma_k^l$ is not amenable.
\end{theorem}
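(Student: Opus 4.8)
The plan is to exhibit a Ponzi flow on $\Gamma_k^l$; by the results of Section~3 this forces $\Gamma_k^l$ to be non-amenable (and $\Gamma_k^l$ does have bounded geometry, being an induced subgraph of the locally finite Cayley graph of $F$ with respect to the finite generating set $\{x_0,x_1\}$). The flow I would use is the crudest one possible: at each vertex $u$ of $\Gamma_k^l$, push one unit of mass out along each of the two edges $u\to ux_0$ and $u\to ux_1$. These are genuinely edges of $\Gamma_k^l$: if $u=wv$ with $w$ a positive word of length $\le k$ in $\{x_0,x_1,x_2,\dots\}$ and $v$ a positive word in $\{x_0,\dots,x_l\}$, then $ux_0=w(vx_0)$ and $ux_1=w(vx_1)$ are again of this form---and this is the only place the hypothesis $l>0$ is used, since we need $x_1$ to be among $x_0,\dots,x_l$. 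The homomorphism $\ell\colon F\to\mathbb Z$ sending every $x_i$ to $1$ increases by exactly $1$ across each such edge, so each of these edges is outgoing at exactly one of its two endpoints, the two outgoing edges at $u$ are distinct, and the flow has constant coefficient $1$ and hence is bounded.

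With this flow the outflow from every vertex is exactly $2$, so it is a Ponzi flow provided the inflow to every vertex is at most $1$. But the inflow to $u$ merely counts the vertices $u'$ of $\Gamma_k^l$ with $u'x_0=u$ or $u'x_1=u$; that is, it equals the number of elements of $\{ux_0^{-1},ux_1^{-1}\}$ lying in $\Gamma_k^l$. So the whole matter reduces to the claim that \emph{for every vertex $u$ of $\Gamma_k^l$, at most one of $ux_0^{-1}$ and $ux_1^{-1}$ is a vertex of $\Gamma_k^l$}. I would deduce this from a statement about $F$ alone: if both $ux_0^{-1}$ and $ux_1^{-1}$ were vertices of $\Gamma_k^l$, writing each in the defining form $w'v'$ (a product of positive generators, hence a \emph{positive} element of $F$) would give positive elements $P=ux_0^{-1}$ and $Q=ux_1^{-1}$ with $Px_0=u=Qx_1$, so it is enough to show that no positive element of $F$ is right-divisible by both $x_0$ and $x_1$.

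For this I would use the standard normal form $x_{i_1}^{a_1}\cdots x_{i_r}^{a_r}$ ($0\le i_1<\cdots<i_r$, $a_j\ge 1$) of a positive element of $F$, which is unique (see \cite{CF96}). A short computation with the relations $x_jx_i=x_ix_{j+1}$ ($i<j$) shows that a positive element is right-divisible by $x_0$ (i.e.\ $gx_0^{-1}$ is again positive) precisely when its normal form involves $x_0$ but no $x_1$, and right-divisible by $x_1$ precisely when its normal form involves $x_1$ but no $x_2$; these two conditions are incompatible, since the first forbids $x_1$ and the second requires it. By uniqueness of the normal form this proves the claim, and then every vertex of $\Gamma_k^l$ has net outflow $2-(\text{inflow})\ge 1$, so our flow is a Ponzi flow and $\Gamma_k^l$ is not amenable.

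The step that will need genuine care is this last claim, and inside it the normal-form bookkeeping that pins down exactly which positive elements are right-divisible by $x_0$ and by $x_1$; the rest is essentially formal once the Section~3 implication (a bounded-geometry graph carrying a Ponzi flow is non-amenable) is in hand.
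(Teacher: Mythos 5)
Your overall strategy --- exhibit a Ponzi flow on $\Gamma_k^l$ and invoke the Section 3 machinery --- is the same as the paper's, but the specific flow you propose does not work, because the group-theoretic claim it rests on is false. You need that for every vertex $u$ of $\Gamma_k^l$ at most one of $ux_0^{-1}$, $ux_1^{-1}$ lies in $\Gamma_k^l$, and you try to get this from the assertion that no positive element of $F$ is right-divisible by both $x_0$ and $x_1$. In the pointed-forest picture of the positive monoid, $ux_0^{-1}$ is positive exactly when the pointer of $u$ is not on the leftmost tree, and $ux_1^{-1}$ is positive exactly when the pointed tree of $u$ is nontrivial; these conditions are simultaneously satisfied by most elements. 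Concretely, $x_2 = x_0x_1x_0^{-1}$ gives $u = x_0x_1 = x_2x_0$, so $ux_1^{-1} = x_0$ and $ux_0^{-1} = x_2$ are both positive, and all three of $u$, $x_0$, $x_2$ lie in $\Gamma_k^l$ for every $k, l \geq 1$ (take $w$ empty and $v = x_0x_1$, resp.\ $v = x_0$, resp.\ $w = x_2$ and $v$ empty). Your normal-form characterization of right-divisibility fails on the same example: $x_0x_1$ is in normal form, contains $x_1$, and is right-divisible by $x_0$. At such a vertex your flow has inflow $2$ and outflow $2$, so condition (iv) of Definition \ref{D:PonziFlow} fails. (Separately, the paper's convention makes a Ponzi flow a net \emph{receiver} at every vertex, so you would in any case need to negate your flow; that part is cosmetic.)

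The failure is structural rather than a miscalculation: vertices of $\Gamma_k^l$ admitting both backward neighbours are abundant, so no constant-coefficient flow supported on the $x_0$- and $x_1$-edges can be a Ponzi flow here. The paper's proof gets around this by placing a non-constant weight $c_P$ on the edge from $P$ to $Px_0^{-1}$, where $c_P$ counts the nontrivial trees to the left of the pointer to which the pointer may be moved while staying in $\Gamma_k^l$ (and one must prove $c_P \leq k+l$ to keep the flow bounded), together with weight $1$ on the edge to $Px_1^{-1}$ when the pointed tree is nontrivial. The key identity is that $c_{Px_0} - c_P$ is positive exactly when the pointed tree of $P$ is nontrivial, i.e.\ exactly when a unit of flow can leak out along the $x_1^{-1}$-edge; this compensating bookkeeping, plus the bound on $c_P$ coming from the characterization of $\Gamma_k^l$ via the maps $\phi_l$, is the real content of the proof and is absent from your argument.
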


The case $k=1, l=1$ was proved by D. Savchuk in \cite{DS08}.

A corollary of this theorem is that all finitely-generated submonoids of the positive monoid of $F$ are not amenable, and therefore if $F$ is amenable these sets have measure zero.

\section{Thompson's Group F}\label{S:TGF}

Thompson's group $F$ has been studied for several decades.  It can be described as the group of piecewise-linear homeomorphisms of the unit interval, all of whose derivatives are integer powers of 2 and with a finite number of break points which are all dyadic rationals.  It can also be described as the group with the following infinite presentation:
$$
<x_0,x_1,x_2,x_3,... | x_ix_j = x_jx_{i-1} \forall i>j >
$$

From this presentation, we may see $x_i = x_0x_{i-1}x_0^{-1}$ for $i > 1$, thus this group is finitely generated by $\{x_0, x_1\}$.  It turns out this group is finitely presented as well.  However, it is still useful to consider the infinite generating set $\{x_0, x_1, x_2, ...\}$.  We have the following definition:

\begin{definition}

The {\em positive monoid} of $F$ is the submonoid of $F$ consisting of elements which can be expressed as words in $\{x_0, x_1, x_2, ...\}$, without using inverses.

\end{definition}

Any element of $F$ can be expressed as an element of the positive monoid times the inverse of such an element.  (Elements of $F$ have a normal form which is such a product).

In \cite{jB04}, the group $F$ is studied using {\it two-way forest diagrams} (so called because trees can extend in either direction in the forest, though we will only be studying forests with trees extending to the right).  We will make extensive use of these diagrams when studying the positive monoid in section 4.  We describe the two-way forest diagrams of the positive monoid here, referring the reader to \cite{jB04} for the proofs.

\begin{definition}

A {\it binary forest} is a sequence of binary trees, such that all but finitely many of the trees are trivial (i.e., have 1 node):

\end{definition}

\vspace{.2in}
\centerline {
\includegraphics[width=4in]{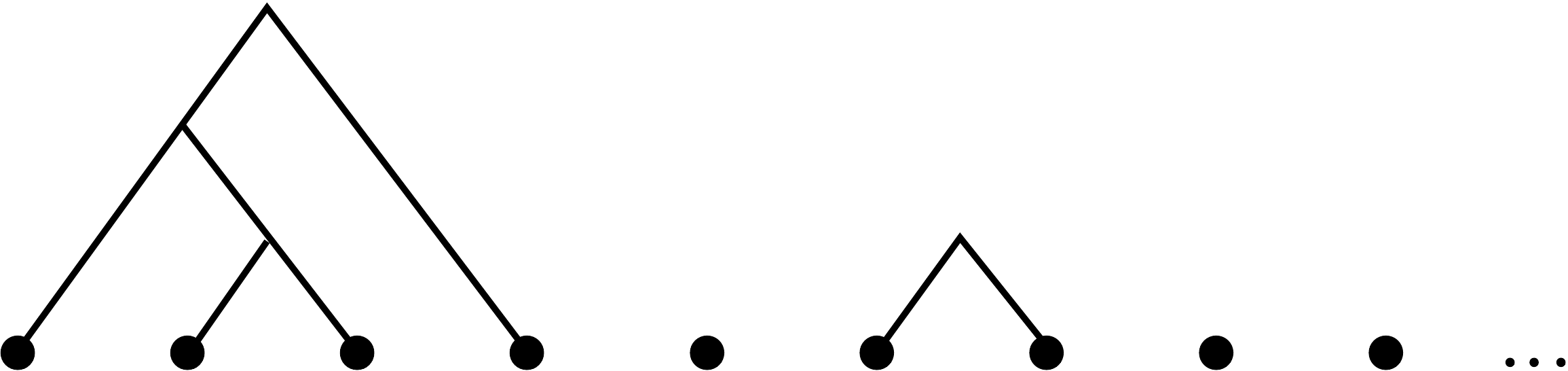}
}

\begin{definition}

 A {\it pointed forest} is a binary forest with a distinguished, or ``pointed'', tree.

\end{definition}

\vspace{.2in}
\centerline {
\includegraphics[width=4in]{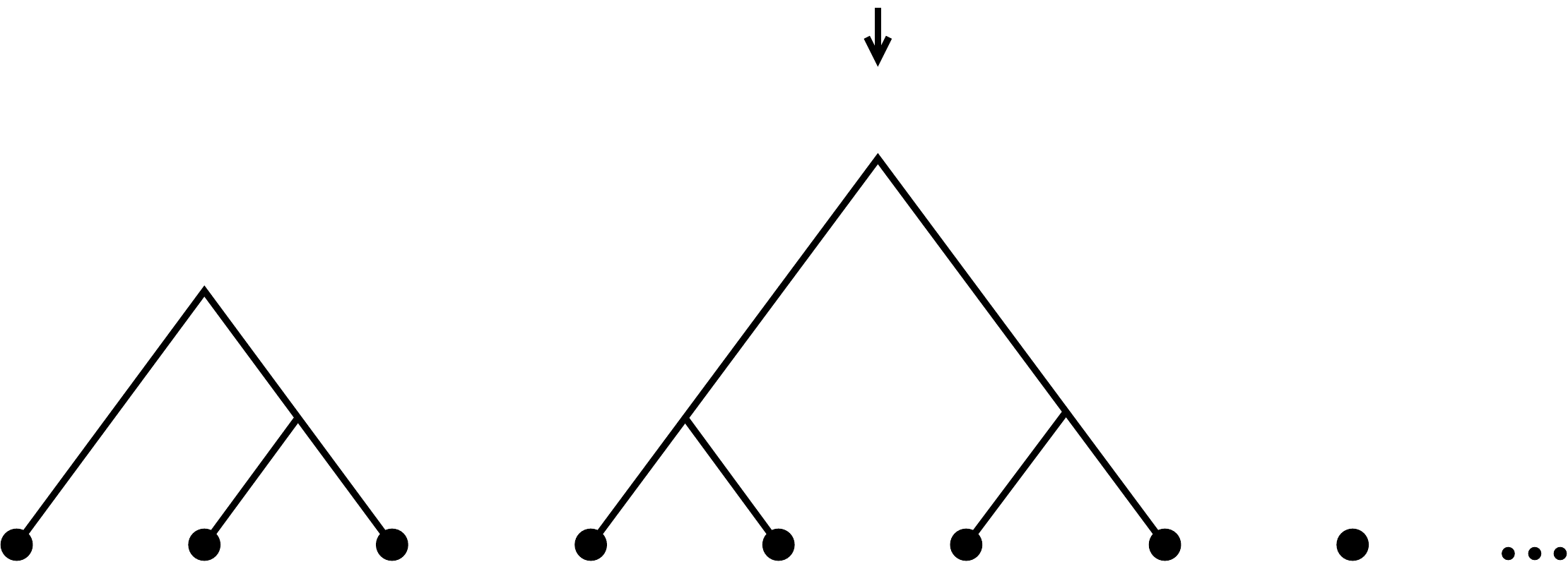}
}

For the remainder of this paper, we will omit the ellipses, and assume a forest diagram or pointed forest diagram has an infinite number of trivial trees continuing to the right.

\smallskip

Each element of the positive monoid of $F$ can be identified with a pointed forest.  The identity element is the pointed forest consisting only of trivial trees, with the the pointer on the leftmost.  Right multiplication by $x_0$ moves the pointer one tree to the right:

\vspace{.1in}
\centerline {
\includegraphics[width=2.9in]{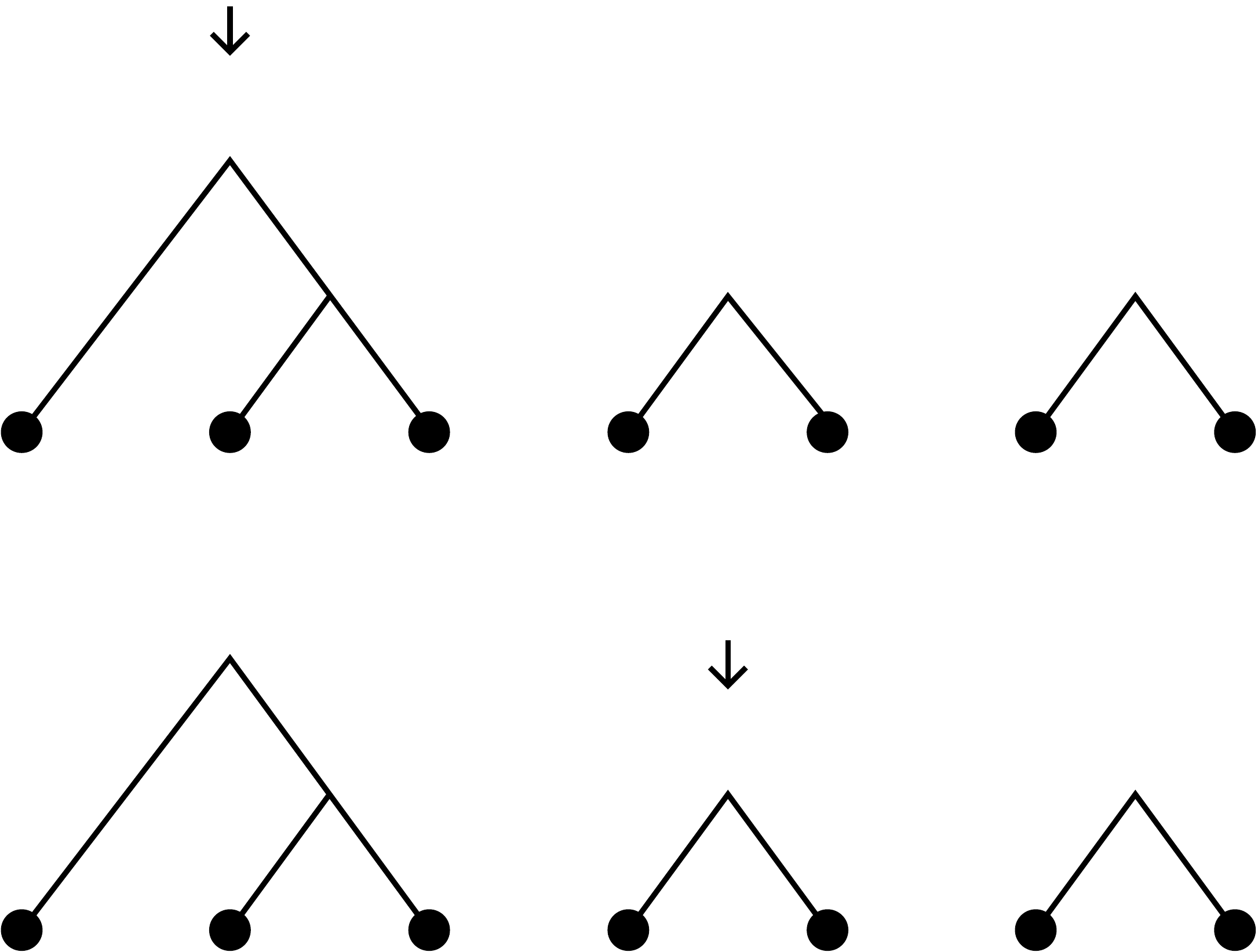}
}
\centerline {Multiplication by $x_0$}
\vspace{.2in}

Right multiplication by $x_1$ adds a caret between the pointed tree and the tree immediately to its right, making a new tree whose left child is the pointed tree and whose right child is the tree to its right.  This new tree becomes the pointed tree:

\vspace{.2in}
\centerline {
\includegraphics[width=3in]{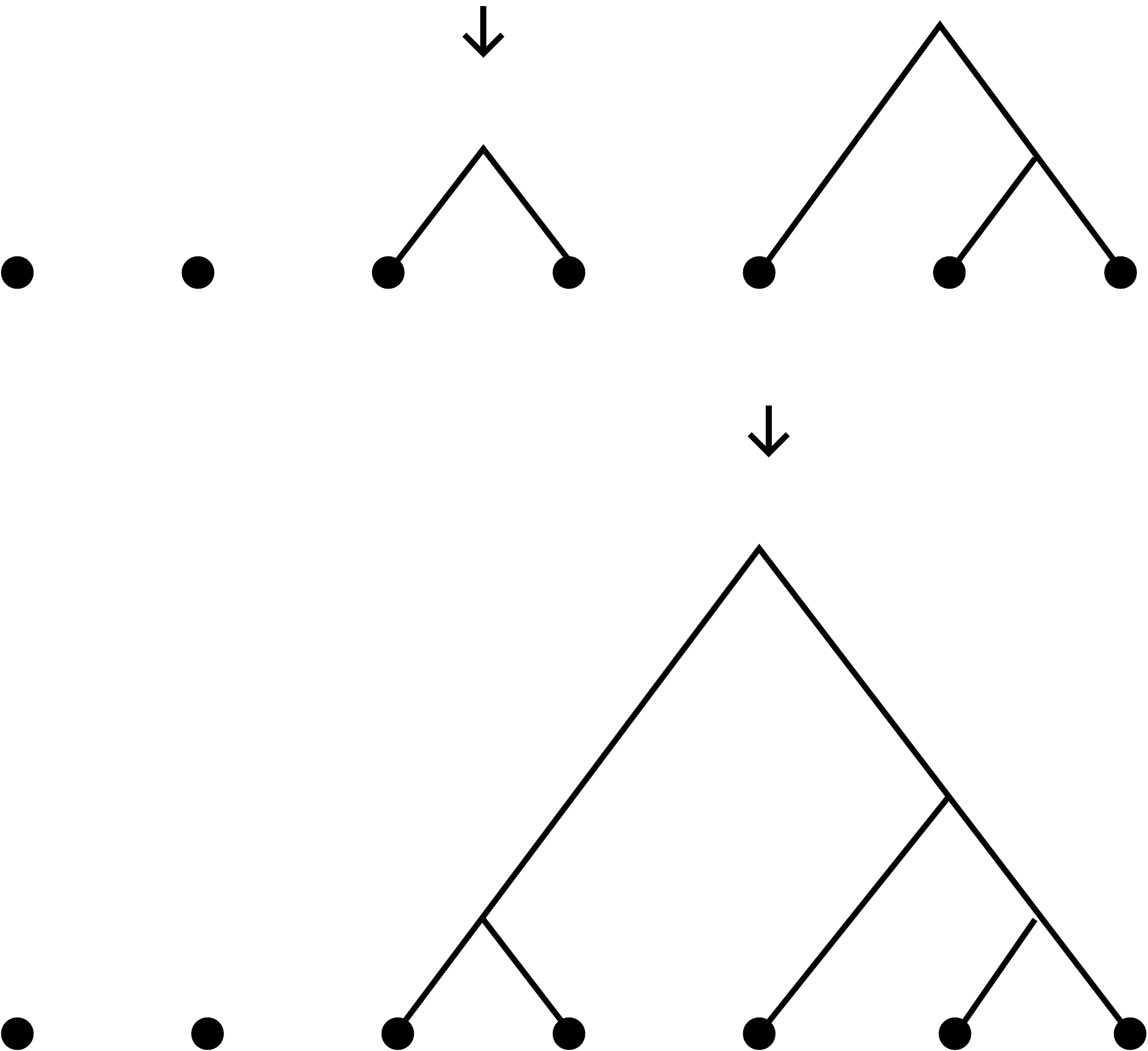}
}
\centerline {Multiplication by $x_1$}
\vspace{.2in}

Since $x_i = x_0^{i-1}x_1x_0^{-(i-1)}$, we can see that right multiplication by $x_i$ moves the pointer $i-1$ trees to the right, adds a caret, and then moves the pointer $i-1$ trees to the left again.  This is equivalent to adding a caret between the trees $i-1$ and $i$ steps away from the pointed tree.

\vspace{.2in}
\centerline {
\includegraphics[width=3in]{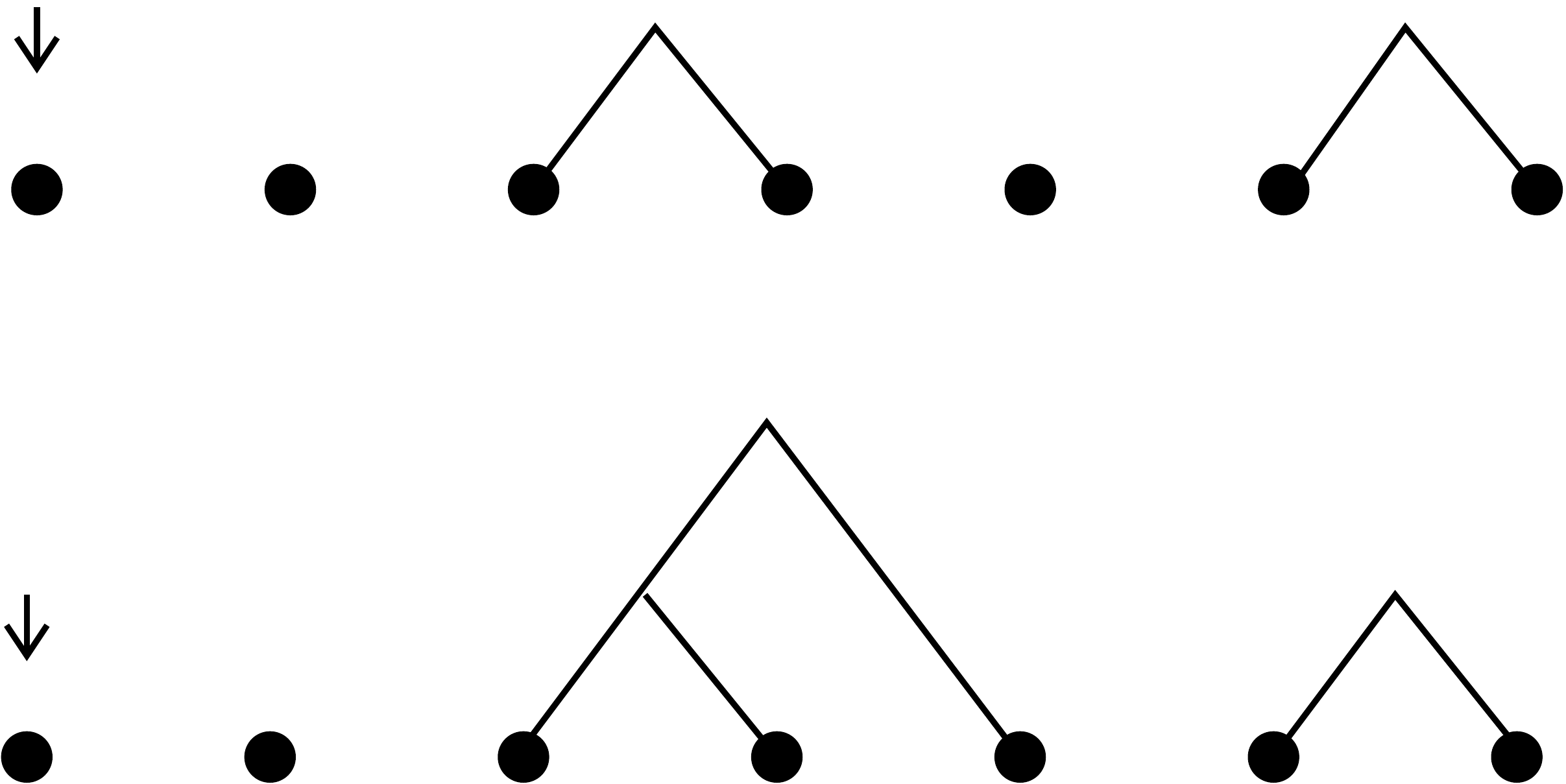}
}
\centerline {Multiplication by $x_3$}
\vspace{.2in}

Multiplication of pointed forests corresponds to ``putting one on top of the other'':  If $P$ and $Q$ are pointed forests, then $PQ$ is the forest obtained by using the trees of $P$ as the leaves of $Q$, with the pointed tree in $P$ acting as the leftmost leaf:

\vspace{.2in}
\centerline {
\includegraphics[width=4in]{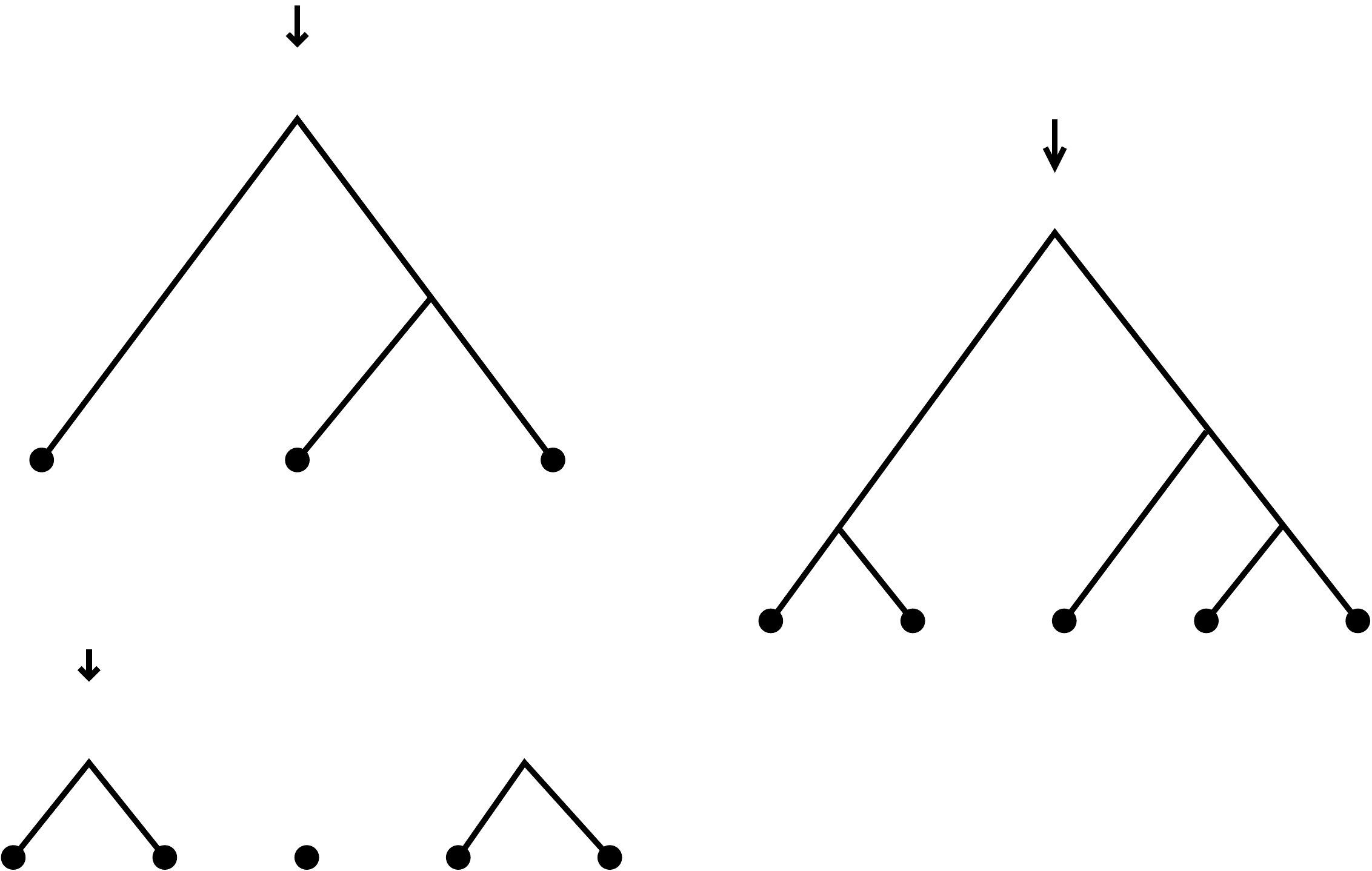}
}
\centerline {Multiplying the two forests on the left yields the forest on the right}
\vspace{.2in}

The pointer is then placed above whatever tree was pointed in $Q$.

It has been a longstanding open question as to whether $F$ is {\em amenable}:

\begin{definition}

A group $G$ is called {\em amenable} if there exists a ``left-invariant measure'' on $G$, that is, a function $\mu : ${\em P}$(G) \rightarrow [0,1]$ such that:

- $\mu(G) = 1$

- $\mu$ is finitely additive:  If $A$ and $B$ are disjoint subsets of $G$, $\mu(A) + \mu(B) = \mu(A\cup B)$.

- $\mu$ is G-invariant:  For any $g \in G$, $A \subset G$, $\mu(A) = \mu(gA)$.

\end{definition}

A useful result for determining amenability is {\it F{\o}lner's Criterion}, which uses the left Cayley graph of $G$ (the graph obtained by taking a generating set $A$ and using $G$ as the vertex set, connecting two vertices $g$ and $g'$ by an edge if $g' = ag$ for some $a \in A$).

\begin{theorem} (F{\o}lner's Criterion):  A group $G$ is amenable if and only if, for any $\epsilon > 0$, there exists a finite subset $A$ of vertices in the Cayley graph of $G$ such that
$$
\frac{\#\partial(A)}{\# A} < \epsilon,
$$
where $\#(A)$ is the number of vertices in $A$, and $\#\partial(A)$ is the number of edges connecting a vertex in $A$ to a vertex outside $A$.
\end{theorem}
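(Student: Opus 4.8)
The plan is to run the classical argument in two directions, after first putting the edge-boundary condition into standard analytic form. Fix the finite generating set $\Sigma$ used to build the Cayley graph. For a nonempty finite $A \subseteq G$ one has $|sA \mathbin{\triangle} A| \le 2\,\#\partial(A)$ for each $s \in \Sigma$ and $\#\partial(A) \le \sum_{s \in \Sigma} |sA \mathbin{\triangle} A|$, so the stated condition is equivalent to the usual F{\o}lner condition: for every finite $T \subseteq G$ and every $\epsilon > 0$ there is a nonempty finite $A$ with $|sA \mathbin{\triangle} A| < \epsilon |A|$ for all $s \in T$. It therefore suffices to prove that $G$ carries a left-invariant finitely additive probability measure on $P(G)$ if and only if it satisfies this F{\o}lner condition.

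For the implication from F{\o}lner sets to amenability, I would take sets $A_n$ witnessing the condition for an exhaustion of $G$, form the finitely additive probability measures $\mu_n(X) = |X \cap A_n| / |A_n|$, fix a nonprincipal ultrafilter $\mathcal{U}$ on $\mathbb{N}$, and set $\mu(X) = \lim_{\mathcal{U}} \mu_n(X)$. Ultrafilter limits are additive, so $\mu$ is a finitely additive probability measure; and for $g \in G$, $X \subseteq G$ the estimate $|\mu_n(gX) - \mu_n(X)| = \bigl|\,|X \cap g^{-1}A_n| - |X \cap A_n|\,\bigr| / |A_n| \le |g^{-1}A_n \mathbin{\triangle} A_n| / |A_n| \to 0$ forces $\mu(gX) = \mu(X)$, so $\mu$ is invariant.

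The reverse implication is the substantive one. Given a left-invariant mean, regarded as a weak-$*$ limit point in $\ell^\infty(G)^*$ of finitely supported probability densities in $\ell^1(G)$, the key step is to upgrade weak-$*$ invariance to norm-almost-invariance: for every finite $T \subseteq G$ and $\epsilon > 0$ produce $f \ge 0$ in $\ell^1(G)$ with $\|f\|_1 = 1$ and $\|\lambda_s f - f\|_1 < \epsilon$ for all $s \in T$. This is Day's argument: invariance of the mean places $0$ in the weak-$*$ closure of the convex set $\{(\lambda_s f - f)_{s \in T} : f \in \ell^1(G),\, f \ge 0,\, \|f\|_1 = 1\}$, which sits inside $\bigoplus_{s \in T} \ell^1(G)$; on that subspace the ambient weak-$*$ topology restricts to the weak topology, so Mazur's theorem (a convex set has the same weak and norm closures) produces the desired $f$. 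Finally I would extract a F{\o}lner set from $f$ by the layer-cake identity: with $A_t = \{g : f(g) > t\}$ one has $\|f\|_1 = \int_0^\infty |A_t|\,dt$ and $\|\lambda_s f - f\|_1 = \int_0^\infty |sA_t \mathbin{\triangle} A_t|\,dt$, so averaging over $t$ against the measure $|A_t|\,dt$ gives some $t > 0$ with $|sA_t \mathbin{\triangle} A_t| < \epsilon' |A_t|$ for all $s \in T$ simultaneously, and $A_t$ is finite and nonempty. The main obstacle is this reverse direction, and specifically the weak-$*$-to-norm passage in Day's step; the forward direction and the bookkeeping relating $\#\partial(A)$ to symmetric differences are routine.
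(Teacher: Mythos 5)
The paper does not actually prove this theorem: it is the classical criterion of F{\o}lner, stated here with citations to \cite{FO55} and \cite{NA64}, so there is no internal argument to compare yours against. Judged on its own, your outline is the standard modern proof and it is correct. The translation between $\#\partial(A)$ and the symmetric differences $|sA \mathbin{\triangle} A|$ is right (in fact each boundary edge labeled $s$ contributes exactly one element to $sA \mathbin{\triangle} A$, so your bounds are safe), though you should say a word about why it suffices to test the F{\o}lner condition on the generating set rather than on arbitrary finite $T$ --- this is the routine triangle inequality $|stA \mathbin{\triangle} A| \le |s(tA) \mathbin{\triangle} tA| + |sA \mathbin{\triangle} A| = |tA \mathbin{\triangle} A| + |sA \mathbin{\triangle} A|$ and an induction on word length, but it is a genuine step in passing from the graph-theoretic statement to the usual one. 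The forward direction via an ultrafilter limit of the normalized counting measures $\mu_n$ is correct (one just needs the $A_n$ chosen against an exhaustion of $G$ so that each fixed $g$ is eventually covered). The reverse direction is, as you say, the substantive part, and your plan is exactly Day's argument: weak-$*$ invariance of the mean puts $0$ in the weak closure of the convex image set in $\bigoplus_{s\in T}\ell^1(G)$, Mazur upgrades this to the norm closure, and Namioka's layer-cake identity $\|\lambda_s f - f\|_1 = \int_0^\infty |sA_t \mathbin{\triangle} A_t|\,dt$ extracts a finite level set that works for all $s \in T$ simultaneously. All the pieces fit together; writing it out in full would essentially reproduce the Namioka reference the paper points to.
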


Intuitively, F{\o}lner's criterion says there are finite subsets of the Cayley graph whose boundaries are arbitrarily small when compared to the size of the sets themselves.

Since the left- and right- Cayley graphs of a group are isomorphic, F{\o}lner's criterion shows that a left-invariant measure exists on a group if and only if a right-invariant measure exists.  For the remainder of this paper, we will be concerned with {\em right}-invariant measures, and so all Cayley graphs we consider from now on will be right Cayley graphs ($g$ and $g'$ are connected by an edge iff $ga = g'$ for some generator $a$).

F{\o}lner's criterion is very useful, in that it can be applied not only to a group but to any graph.  In particular, we say an arbitrary graph is amenable if F{\o}lner's criterion holds for that graph.  This allows us to state the following proposition:

\begin{proposition}

Let $\Gamma$ be the subgraph of the Cayley graph of Thompson's Group $F$ (using the $x_0, x_1$ generating set) consisting of vertices in the positive monoid and all edges between such vertices.  Then $\Gamma$ is amenable if and only if $F$ is.

\end{proposition}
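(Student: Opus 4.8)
The plan is to check Følner's criterion directly on $\Gamma$ and on $\mathrm{Cay}(F)$ and to compare the two boundary counts. The bookkeeping to set up first is this: because $P$ is closed under right multiplication by $x_0$ and by $x_1$, at every vertex $p\in P$ the edges $p\to px_0$ and $p\to px_1$ of $\mathrm{Cay}(F)$ already lie in $\Gamma$, so the only edges of $\mathrm{Cay}(F)$ at $p$ that can leave $P$ are $p\to px_0^{-1}$, which is present in $\Gamma$ exactly when $p\in Px_0$, and $p\to px_1^{-1}$, present exactly when $p\in Px_1$. In the pointed-forest model, $p\notin Px_0$ says the pointer of $p$ lies on the leftmost tree and $p\notin Px_1$ says the pointed tree of $p$ is trivial; call these the \emph{source} vertices and write $\mathrm{Src}$ for their set. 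Then for every finite $A\subseteq P$,
$$
\partial_{\Gamma}(A)\subseteq\partial_{\mathrm{Cay}(F)}(A),\qquad
\#\partial_{\mathrm{Cay}(F)}(A)\le\#\partial_{\Gamma}(A)+2\,\#(A\cap\mathrm{Src}),
$$
so the whole proposition turns on how much a Følner set can meet $\mathrm{Src}$.

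Granting this, ``$F$ amenable $\Rightarrow$ $\Gamma$ amenable'' reduces, via the inclusion $\partial_{\Gamma}(A)\subseteq\partial_{\mathrm{Cay}(F)}(A)$, to producing \emph{some} Følner set of $\mathrm{Cay}(F)$ that happens to lie inside $P$; such a set is then automatically Følner in $\Gamma$. I would try to extract one from the exhaustion $P\subseteq Px_0^{-1}\subseteq Px_0^{-2}\subseteq\cdots$, whose nesting comes from $Px_0\subseteq P$ and whose union is all of $F$ because each $x_i^{-1}$ becomes positive after enough right multiplications by $x_0$ (for $i\ge1$ one has $x_i^{-1}x_0^{\,i}=x_0^{\,i-1}$), so the negative part of any element of $F$ is absorbed by a high enough power of $x_0$. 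A Følner set $B$ of $\mathrm{Cay}(F)$ then lies in some $Px_0^{-n}$, and one must push it into $P$. I expect this to be \emph{the main obstacle}: right translation by $x_0^{\,n}$ is not a graph automorphism of the (right) Cayley graph — it conjugates the generator $x_1$ to the non-generator $x_0^{\,n}x_1x_0^{-n}=x_{n+1}$, a word of length about $2n+1$ — so $Bx_0^{\,n}$ need not be Følner; and the only other obvious map, the ``positive part'' retraction $g=pq^{-1}\mapsto p$, is the identity on $P$ but is badly non-injective. The substance of this direction is to show that, for a well-chosen Følner sequence, one of these transports distorts the boundary ratio only negligibly — equivalently, that some invariant mean on $F$ assigns $P$ positive mass. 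This is precisely the point at which one must use that $\Gamma$ is built on \emph{all} of $P$ rather than a finitely generated submonoid of it, for which (by the Main Theorem) the corresponding subgraph carries no Følner sequence at all.

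For ``$\Gamma$ amenable $\Rightarrow$ $F$ amenable'', the displayed inequality shows it suffices to find a Følner sequence $A_n\subseteq P$ for $\Gamma$ with $\#(A_n\cap\mathrm{Src})=o(\#A_n)$. The structural fact I would use is that $\mathrm{Src}$ is swept forward along short paths of $\Gamma$: from any source vertex one reaches a non-source vertex after at most two forward edges (multiply a trivial-pointed-tree, non-leftmost vertex by $x_1$; multiply a leftmost-pointer vertex by $x_0$, and then by $x_1$ if it is still a source). Starting from any $\Gamma$-Følner sequence, I would redistribute its mass off $\mathrm{Src}$ along these forward paths while keeping the boundary ratio under control. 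The delicate point here — the technical heart of this direction — is that a one-step fattening also \emph{creates} new source vertices, so it only shrinks the proportion lying in $\mathrm{Src}$ by a bounded factor; forcing $\#(A_n\cap\mathrm{Src})/\#A_n\to0$ requires iterating or averaging more carefully, or arguing directly in the pointed-forest model. With both implications in hand, the boundary identity above completes the proof.
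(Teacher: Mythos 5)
Your bookkeeping at the start is correct and is exactly the second fact the paper leans on (all edges leaving $P$ are of $x_0^{-1}$ or $x_1^{-1}$ type), but both implications are left open at precisely the points you flag as ``the main obstacle'' and ``the technical heart,'' and in each case the missing step is a specific known idea rather than something that can be deferred.

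For ``$F$ amenable $\Rightarrow\Gamma$ amenable'' you only consider right translation (correctly rejected: it is not an automorphism of the right Cayley graph) and the positive-part retraction (correctly rejected: not injective). The move you are missing is \emph{left} translation: $g\mapsto hg$ is an automorphism of the right Cayley graph for every $h$, and the positive monoid $P$ satisfies a left Ore condition (any two positive elements have a common left multiple, e.g.\ $Px_i\cap Px_j\neq\emptyset$ follows from the defining relations), so every element of $F$ is a left fraction $r^{-1}s$ with $r,s\in P$ and any finite $A\subseteq F$ admits a single $r\in P$ with $rA\subseteq P$. This is the paper's first cited fact, ``any finite set in $F$ can be translated into the positive monoid''; the translated F{\o}lner set sits in $\Gamma$ with the same Cayley boundary, and $\partial_\Gamma\subseteq\partial_{\mathrm{Cay}(F)}$ finishes it. No positive-mass-of-$P$ statement is needed. (Also, your identity $x_i^{-1}x_0^{\,i}=x_0^{\,i-1}$ is false, though that computation is moot once right translation is abandoned.)

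For ``$\Gamma$ amenable $\Rightarrow F$ amenable'' the fattening/redistribution scheme is unnecessary and, as you note, does not converge; the point is that the quantity you want to control is \emph{automatically} small. Orient each edge of $\Gamma$ as $q\to qx_i$. Every vertex of $\Gamma$ has out-degree exactly $2$ (since $px_0,px_1\in P$ always) and in-degree at most $2$, and the in-degree deficit at $p$ counts exactly the Cayley edges from $p$ that leave $P$. Summing over a finite $A\subseteq P$ gives
$$\#E(A,F\setminus P)\;=\;\sum_{p\in A}\bigl(2-\mathrm{indeg}_\Gamma(p)\bigr)\;=\;e(A,P\setminus A)-e(P\setminus A,A)\;\le\;\#\partial_\Gamma(A),$$
so $\#\partial_{\mathrm{Cay}(F)}(A)\le 2\,\#\partial_\Gamma(A)$ for \emph{every} finite $A\subseteq P$, and any F{\o}lner sequence for $\Gamma$ is already F{\o}lner for $\mathrm{Cay}(F)$. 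In particular $\#(A\cap\mathrm{Src})\le\#\partial_\Gamma(A)=o(\#A)$ for free, which is the estimate your proposal tries and fails to manufacture by hand.
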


The proof uses the facts that any finite set in $F$ can be translated into the positive monoid, and that all outgoing edges from the positive monoid are of the $x_0^{-1}$ or $x_1^{-1}$ type.  (A proof can be found in \cite{DS08}).

\section{Uniformly Finite Homology} \label{S:UFH}

This section describes and obtains a few results from the uniformly finite homology of Block and Weinberger.  We will always be considering a graph $\Gamma$ of bounded degree, though many of the results apply to a much broader class of metric spaces.

\begin{definition} \label{D:UF1C}

Let $\Gamma$ be a graph of bounded degree with vertex set $V$.  A {\em uniformly finite 1-chain with integer coefficients} on $\Gamma$ is a formal infinite sum $\sum a_{x,y}(x,y)$, where the $(x,y)$ are ordered pairs of vertices of $\Gamma$, $a_{x,y} \in \mathbb{Z}$, such that the following properties are satisfied:

\bigskip

1) There exists $K>0$ such that $ \forall x,y, |a_{x,y}| < K$

2) There exists some $R>0$ so that $a_{x,y} = 0$ if $d(x,y) > R$

\end{definition}

Notice that condition 2) guarantees that for any fixed $x \in V$, the set of pairs $(x,y)$ such that $a_{x,y} \neq 0$ is finite.  Similarly, the pairs $(y,x)$ with $a_{y,x} \neq 0$ also form a finite set.  This allows us to make the following definition:

\begin{definition} \label{D:PSI}

A uniformly finite 1-chain is a {\em Ponzi scheme} if, for all $x \in \Gamma$, we have $\sum_{v \in \Gamma}a_{v,x} - \sum_{v\in \Gamma}a_{x,v} > 0$.

\end{definition}

We now state the main result of \cite{BW92} which we will use in this paper:

\begin{theorem} \label{T:BW}

Let $\Gamma$ be a graph of bounded degree.  A Ponzi scheme exists on $\Gamma$ if and only if $\Gamma$ is not amenable (in the F{\o}lner sense).

\end{theorem}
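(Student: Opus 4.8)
The plan is to prove the two implications separately; they are of quite different character, one a short averaging estimate and the other a combinatorial construction.

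For ``Ponzi scheme $\Rightarrow$ not amenable'': let $c = \sum a_{x,y}(x,y)$ be a Ponzi scheme, say with $|a_{x,y}| < K$ and $a_{x,y} = 0$ whenever $d(x,y) > R$, and let $D$ bound the degree of $\Gamma$. Since the coefficients are integers, the net inflow $b_x := \sum_v a_{v,x} - \sum_v a_{x,v}$ is a positive integer, so $b_x \ge 1$ for every vertex $x$. Given a finite set $A$ of vertices, I would sum the $b_x$ over $x \in A$: the contributions coming from pairs with both coordinates in $A$ cancel, and what survives involves only pairs with exactly one coordinate in $A$, whose $A$-coordinate then lies within distance $R$ of the edge boundary $\partial A$. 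Counting vertices in balls of radius $R$ via the degree bound (there are at most $N_R := 1 + D + \cdots + D^R$ of them), one obtains $\#A \le \sum_{x \in A} b_x \le 4 K N_R^2 \, \#\partial A$, so $\#\partial A / \#A$ is bounded below by a positive constant independent of $A$, and F{\o}lner's criterion fails. I expect this direction to be routine.

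For ``not amenable $\Rightarrow$ Ponzi scheme exists'' --- the substantive direction --- I would proceed in three steps. First, upgrade the failure of F{\o}lner's criterion, i.e. the existence of $\epsilon_0 > 0$ with $\#\partial A \ge \epsilon_0 \#A$ for all finite $A$, into a \emph{uniform doubling} statement: there is an $R$ with $\#N_R(A) \ge 2 \#A$ for every finite $A$, where $N_r(A)$ denotes the set of vertices within distance $r$ of $A$. This holds because bounded degree forces $\#N_1(A) \ge (1 + \epsilon_0/D)\#A$ for every finite set, and iterating $R$ times eventually beats $2$. Second, apply the locally finite form of Hall's marriage theorem to the bipartite graph whose left part is $V \times \{1,2\}$ (two disjoint copies of the vertex set), whose right part is $V$, and in which $(v,i)$ is joined to $w$ exactly when $d(v,w) \le R$; the doubling inequality is precisely Hall's condition, so there is a matching saturating $V \times \{1,2\}$, which amounts to a subset $W \subseteq V$ together with a surjection $\psi : W \to V$ all of whose fibres have size $2$ and with $d(w, \psi(w)) \le R$. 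Third, assemble the chain: fix for each $w \in W$ a geodesic from $w$ to $\psi(w)$ (of length $\le R$) and let $c$ be the sum over $w \in W$ of the associated oriented edge-paths. Interior vertices of a path contribute $0$ to the net inflow, so a short case check gives net inflow at each vertex equal to $1$ or $2$, always positive; and since only boundedly many of these geodesics can run through a given edge (the degree bound again), $c$ is uniformly finite. Hence $c$ is a Ponzi scheme.

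The main obstacle is this second direction, and the delicate point within it is the appeal to Hall's theorem for an infinite bipartite graph: one must check it is locally finite --- which it is, because bounded degree together with the finite radius $R$ makes every vertex have finite degree --- so that a single matching saturating the entire left part exists, not merely matchings of arbitrary finite subsets. The doubling lemma and the verification that the assembled chain is uniformly finite are then elementary bookkeeping.
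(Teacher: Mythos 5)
Your proof is correct, but note that the paper does not prove this theorem at all: it is quoted as the main result of Block--Weinberger \cite{BW92}, so you have supplied a self-contained argument for something the author only cites. Both halves of what you wrote check out. The easy direction is the standard summation of net inflows over a finite set $A$: interior pairs cancel, the surviving pairs straddle $\partial A$ and are controlled by the degree bound and the radius $R$, giving $\#A \le C\,\#\partial A$ (your particular constant is not quite what a careful count gives, but only its finiteness matters). The substantive direction is exactly the classical Block--Weinberger/Whyte argument: upgrade the linear isoperimetric inequality to $\#N_R(A)\ge 2\#A$ by iterating $\#N_1(A)\ge(1+\epsilon_0/D)\#A$, invoke the locally finite Hall theorem to inject $V\times\{1,2\}$ into $V$ within distance $R$, and push mass along the resulting geodesics; your case check (net inflow $1$ or $2$ at every vertex, including the degenerate case $\psi(w)=w$ where the path is trivial) and the bound of at most $\#B_R(x)$ geodesics through any edge are both sound. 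You correctly flagged the one delicate point, namely that Hall's theorem for infinite bipartite graphs requires local finiteness of the left side, which the degree bound provides. The only cosmetic remark is that your chain a priori has coefficients only on distance-$1$ pairs, which in fact lands you directly in the ``Ponzi flow'' formulation the paper derives separately in its Proposition 3.6.
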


We will use a rephrased version of this theorem for the case of our graphs:

\begin{definition} \label{D:PonziFlow}

Let $\Gamma$ be a graph of bounded degree with vertex set $V$. A {\it Ponzi flow on $\Gamma$} will mean a function $G: V\times V\rightarrow \mathbb{Z}$ with the following properties:

\bigskip

i) $G(a,b) = 0$ if there is no edge from $a$ to $b$ in $\Gamma$,

\smallskip

ii) $G(a,b) = -G(b,a)$ for all $a, b \in V$,

\smallskip

iii) $G$ is uniformly bounded, i.e., $\exists N \in \mathbb{Z}$ such that $\forall a, b \in V, G(a,b) \leq N $

\smallskip

iv) For each $a\in V$, $\sum_{b\in G} G(b,a) > 0$.

\end{definition}

Note that the sum in condition ($iv$) is guaranteed to be finite by condition ($i$).  Intuitively, the function $G$ defines a ``flow'' on the graph $\Gamma$, assigning an integer and a direction to each edge, such that for any vertex the total ``inward'' flow is more than the total ``outward'' flow.  This is almost exactly a Ponzi scheme in different language, with the exception that all ``pairs'' must be exactly of distance 1.  However, this difference is unimportant:

\begin{proposition} \label {P:PSPF}

Let $\Gamma$ be a graph of bounded degree.  There exists a Ponzi scheme on $\Gamma$ if and only there exists a Ponzi flow on $\Gamma$.

\end{proposition}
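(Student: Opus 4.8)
The plan is to prove both directions by translating between the two objects, using the fact that a Ponzi scheme has bounded ``jump length'' $R$ while a Ponzi flow lives only on edges. The easy direction is that a Ponzi flow is \emph{almost} a Ponzi scheme already: given a Ponzi flow $G$, define the $1$-chain $\sum a_{x,y}(x,y)$ by keeping only the pairs $(x,y)$ with $G(x,y) > 0$ and setting $a_{x,y} = G(x,y)$ for those (discarding the antisymmetric negative half). Condition (iii) plus antisymmetry gives the uniform bound in Definition~\ref{D:UF1C}(1); since $G$ is supported on edges, every nonzero pair has $d(x,y) = 1$, so $R = 1$ works for (2). The net inflow at $x$ for this chain is $\sum_v a_{v,x} - \sum_v a_{x,v} = \sum_{v: G(v,x)>0} G(v,x) + \sum_{v: G(x,v)>0} G(v,x) = \sum_{v} G(v,x) > 0$ by (iv), so the chain is a Ponzi scheme. (One must note that $G(v,x)$ and $G(x,v)$ cannot both be positive, by antisymmetry, so the two sums partition the neighbors of $x$.)

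The substantive direction is the converse: from a Ponzi scheme $\sum a_{x,y}(x,y)$ with parameters $K$ and $R$, produce a Ponzi flow. The idea is to push each ``long'' pair $(x,y)$ along a path. For each ordered pair $(x,y)$ with $a_{x,y} \neq 0$, fix once and for all a shortest path $x = z_0, z_1, \ldots, z_m = y$ in $\Gamma$ with $m = d(x,y) \leq R$, and define a preliminary antisymmetric flow on edges by, for each consecutive pair $(z_i, z_{i+1})$, adding $a_{x,y}$ to $G(z_i, z_{i+1})$ and subtracting it from $G(z_{i+1}, z_i)$. Summing these contributions over all pairs $(x,y)$ gives a function $G$ that is antisymmetric by construction and supported on edges, so (i) and (ii) hold. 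For the uniform bound (iii): $\Gamma$ has bounded degree $D$, each vertex lies on paths only between pairs within distance $R$ of it, there are at most (size of an $R$-ball)${}^2$ such pairs, each contributes a path of length $\leq R$ through a fixed edge at most once with coefficient bounded by $K$ in absolute value — so $G$ is bounded by a constant depending only on $D$, $R$, $K$. For the net-inflow condition (iv): the key observation is that pushing $a_{x,y}$ along a path from $x$ to $y$ changes the net inflow only at the endpoints — it \emph{decreases} net inflow at $x$ by $a_{x,y}$ and \emph{increases} it at $y$ by $a_{x,y}$, leaving all interior vertices balanced (this is the discrete divergence/telescoping fact). Hence the net inflow of $G$ at any vertex $x$ equals exactly $\sum_v a_{v,x} - \sum_v a_{x,v}$, which is $> 0$ since the original chain is a Ponzi scheme.

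The main obstacle — really the only place care is needed — is the uniform boundedness in step two: one must be sure that infinitely many long pairs cannot pile up on a single edge. This is exactly where bounded degree is used: the number of pairs $(x,y)$ with $a_{x,y} \neq 0$ whose chosen path traverses a given edge $e$ is at most the number of pairs of vertices both within distance $R$ of $e$, which is bounded by $(\#B_R)^2 \leq (D^{R+1})^2$ uniformly in $e$; multiplying by the coefficient bound $K$ gives a uniform bound $N$ for $G$. Everything else is a direct verification. I would close the proof by remarking that conditions (i)--(iii) for a Ponzi flow restated in the language of Definition~\ref{D:UF1C} and Definition~\ref{D:PSI} show the two notions agree precisely on the ``distance one'' case, so that Theorem~\ref{T:BW} immediately yields the rephrased statement we will use in Section~4: a graph of bounded degree is non-amenable if and only if it admits a Ponzi flow.
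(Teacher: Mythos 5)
Your proposal is correct and follows essentially the same route as the paper: the flow-to-scheme direction is a direct translation, and the scheme-to-flow direction reroutes each long pair along a path of length at most $R$, using the telescoping of interior contributions to preserve net inflow and the bounded-degree count of pairs within distance $R$ of a given edge to keep the coefficients uniformly bounded. The only cosmetic difference is that you build the antisymmetric function $G$ directly from the path contributions, whereas the paper first produces a new Ponzi scheme supported on adjacent pairs and then antisymmetrizes via $G(x,y)=a'_{x,y}-a'_{y,x}$.
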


\begin{proof} The ``if'' direction is trivial:  Given a Ponzi flow, we may define our formal sum to be $\sum G(x,y)(x,y)$, and this will be a uniformly finite 1-chain with integer coefficients:  Condition 1) is implied by ($iii$) together with ($iv$), and condition 2) is implied by ($i$).  This 1-chain will be a Ponzi scheme by conditions ($ii$) and ($iv$).

To see the ``only if'' direction, we start with a Ponzi scheme $\sum a_{x,y}(x,y)$, and formally add canceling sums to it to obtain a new scheme $\sum a'_{x,y}(x,y)$ with $a'_{x,y} = 0$ if $d(x,y) > 1$.  We do this in the following way: for each $a_{x,y}$ such that $d(x,y) = n > 1$, let $x = v_0, v_1, ..., v_{n-1}, v_n = y$ be a sequence of vertices forming a path from $x$ to $y$, and add $\sum_{i=0}^{n-1} a_{x,y}(v_i, v_{i+1}) - a_{x,y}(x,y)$ to our sum coefficientwise.  This will still satisfy the inequality of \ref{D:PSI}, since each $v_i$ appears exactly twice in our added sum, once contributing $a_{x,y}$ and once contributing $-a_{x,y}$, leaving the sum unchanged.  The coefficients of $(x,y)$ will cancel, leaving the new coefficient 0.  Thus we have cancelled a coefficient $a_{x,y}$ whose vertices are not adjacent while only affecting other coefficients with adjacent vertices.

Each adjacent pair $(x,y)$ has no more than ${d^{R+1}}\choose{2}$ pairs of vertices within distance $R$, where $d$ is the bound on degree and $R$ is the radial bound from condition 2).  That means that if we choose paths as above for every pair $(x,y)$ with $d(x,y) > 1$ and $a_{x,y} > 0$, each edge will be part of no more than ${d^{R+1}}\choose{2}$ of these paths.   Thus, adding a sum as above for each path will yield us a new well-defined formal sum $\sum a'_{x,y}(x,y)$.  We will have $a_{x,y} = 0$ if $d(x,y) > 1$, since these coefficients have been cancelled.  Furthermore, we have that each $a'_{x,y}$ is bounded by $K ({{d^{R+1}}\choose{2}} + 1)$, thus condition 1) of \ref{D:UF1C} still holds (with a different bound), so we have a uniformly finite 1-chain with $R=1$.  Since we did not change the sums in \ref{D:PSI}, $\sum a'_{x,y}(x,y)$ is still a Ponzi scheme.  Now we simply define $G(x,y) = a_{x,y} - a_{y,x}$, and condition ($i$) is implied by $R=1$, condition ($ii$) is clear, ($iii$) is implied by 1), and ($iv$) is implied by the inequality of \ref{D:PSI}, thus $G$ is a Ponzi flow on $\Gamma$.
\end{proof}

A quantified treatment of Ponzi flows can be found in \cite{LY99}.

If a Ponzi flow exists on a Cayley graph, we then have that there can be no right-invariant measure on the group, since no F{\o}lner sequence exists and this implies the group cannot be amenable.  We give here an elementary proof that existence of a Ponzi flow directly implies no right-invariant measure exists, for an appropriate type of graph on which the notion ``right invariant'' can be defined:

\begin{definition} By a {\em labeled directed graph} we shall mean a directed graph of bounded degree, each of whose edges are labeled by elements from a finite set $\{g_1, g_2, ..., g_n\}$, such that each vertex of $\Gamma$ has at most one incoming edge and one outgoing edge with each label.  It is not necessary that a vertex have an edge with each label.
\end{definition}

The motivating example is the case where $\Gamma$ is a subgraph of a Cayley graph of a group generated by $\{g_1, g_2, ..., g_n\}$, but the results here hold for all labeled directed graphs.

\begin{definition}
Let $\Gamma$ be a labeled directed graph.  Suppose $S$ is a subset of vertices of $\Gamma$.  For $1 \leq i \leq n$, we say $S$ is {\em $g_i$-translatable} if each vertex in $S$ has an outgoing edge labeled by $g_i$.  In such a case we denote by $Sg_i$ the set of vertices with an incoming edge labeled by $g_i$ whose opposite vertex lies in $S$, i.e., the set of vertices at the other ends of the outgoing $g_i$-labeled edges.
\end{definition}

In the case where $\Gamma$ is a subgraph of a Cayley graph, $Sg_i$ is just the right-translate of the elements of $S$ under the group multiplication, and $S$ being $g_i$-translatable simply means $Sg_i \subseteq \Gamma$.  We will abuse notation slightly in the case of one-element sets, so that if $v$ has an outgoing edge labeled by $g_i$, we will call the vertex on the other side of the edge $vg_i$ (which corresponds to standard group multiplication in the case of a Cayley graph).  We will also abuse notation in that we will occasionally identify $\Gamma$ with its vertex set.

If $S$ (or a single vertex $v$) is the $g_i$-translate of some other vertex set, we will call that set $Sg_i^{-1}$ (or the single vertex $vg_i^{-1}$).  Again, this is the same as the standard multiplication in the case of a Cayley graph.

\begin{definition}
Let $\mu$ be a finitely additive measure on the vertex set of $\Gamma$, such that $\mu(\Gamma) = 1$.  Then we say $\mu$ is {\em right-invariant} if for each $g_i$ and each $g_i$-translatable subset $S \subseteq \Gamma$, $\mu(S) = \mu(Sg_i)$.
\end{definition}

If $\Gamma$ is the full Cayley graph of a finitely generated group or semigroup, then this definition of right-invariant measure coincides with the standard one, since every set of vertices is $g_i$-translatable for every $i$.  We now have the terminology to state:

\begin{theorem} \label{T:PonziNoMeasure}
Suppose $\Gamma$ is a labeled directed graph, and has a Ponzi flow $G$.  Then there is no finitely additive, right-invariant measure on $\Gamma$.
\end{theorem}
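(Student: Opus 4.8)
The plan is a proof by contradiction, organized around the slogan that a Ponzi flow manufactures mass out of nothing, which no finitely additive right-invariant measure can tolerate. So I would assume a finitely additive, right-invariant measure $\mu$ with $\mu(\Gamma)=1$ exists and derive a contradiction. First I would record two elementary consequences of Definition~\ref{D:PonziFlow}. Writing $f(v)=\sum_{b\in V}G(b,v)$ for the net inward flow at a vertex $v$, condition ($iv$) together with the integrality of $G$ forces $f(v)\ge 1$ for every $v$, while the bound $N$ of condition ($iii$) together with bounded degree (at most $n$ incoming and $n$ outgoing edges, where $n$ is the number of labels) forces $|f(v)|\le 2Nn$. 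In particular $f$ is a bounded function taking only finitely many integer values.

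Next I would slice the flow according to label and magnitude. For each label $g_i$ and each nonzero integer $m$ with $|m|\le N$, let
$$ S_{i,m}=\{\, v\in V : v \text{ has an outgoing } g_i\text{-edge on which } G \text{ takes the value } m \,\}. $$
Every vertex of $S_{i,m}$ has an outgoing $g_i$-edge, so $S_{i,m}$ is $g_i$-translatable and right-invariance of $\mu$ yields $\mu(S_{i,m})=\mu(S_{i,m}g_i)$. The geometric core of the argument is then the pointwise identity
$$ f(v)=\sum_{i}\Bigl(\sum_{m}m\,\mathbf{1}_{S_{i,m}g_i}(v)-\sum_{m}m\,\mathbf{1}_{S_{i,m}}(v)\Bigr), $$
obtained by grouping the edges incident to $v$ by their label and orientation: the $i$-th summand is the flow $v$ receives along its incoming $g_i$-edge minus the flow it sends along its outgoing $g_i$-edge, a missing edge contributing $0$. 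Here one uses that $v\in S_{i,m}g_i$ exactly when $v$ has an incoming $g_i$-edge carrying flow $m$ (equivalently, $vg_i^{-1}$ is defined and lies in $S_{i,m}$), together with the antisymmetry $G(a,b)=-G(b,a)$.

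To finish, I would evaluate this identity of functions against $\mu$. Since $f$ and both of its expressions above are simple functions --- finite combinations of indicators of the finitely many sets $S_{i,m}$, $S_{i,m}g_i$ and the level sets of $f$ --- ``integration'' here means nothing beyond the assignment $\sum_j c_j\mathbf{1}_{A_j}\mapsto\sum_j c_j\mu(A_j)$, which is well defined, linear and monotone purely from finite additivity and non-negativity of $\mu$; no countable additivity or genuine measure theory enters. On the one hand the right-hand side evaluates to $\sum_i\sum_m m\bigl(\mu(S_{i,m}g_i)-\mu(S_{i,m})\bigr)=0$, the terms cancelling in pairs by right-invariance. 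On the other hand $f\ge 1$ forces the value to be at least $\mu(\Gamma)=1$. This contradiction proves the theorem.

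The step I expect to demand the most care is the pointwise flow-decomposition identity: one must get every sign right (inward versus outward flow, and the use of $G(a,b)=-G(b,a)$) and check that reorganizing $\sum_{b}G(b,v)$ edge-by-edge according to labels recovers $f(v)$ exactly --- which it does once one notes that an edge is determined by its ordered pair of endpoints, so no edge is counted under two labels (automatic for subgraphs of a Cayley graph). Everything past that point is bookkeeping, and the only conceptual subtlety is recognizing that the merely finitely additive hypothesis on $\mu$ is precisely enough, because every set we ever need to measure lies in one fixed finite family.
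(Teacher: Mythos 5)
Your proof is correct, but it takes a genuinely different (and cleaner) route than the paper. The paper encodes the local flow data at each vertex into a symbol list $L_v$ of fixed length $2nK$, partitions $\Gamma$ into the sets $A_L$, forms the superlevel sets $B_g^j$ (vertices whose list contains at least $j$ copies of $g$), translates these, and double-counts $\sum_{g,j}\mu(B_g^j)$ in two ways to reach the contradiction $2nK \geq 2nK+1$. That argument is, in effect, a hand-rolled layer-cake integration of the same flow decomposition you write down: your sets $S_{i,m}$ slice the flow by label and exact value where the paper's $B_g^j$ slice it by label and layer, the $h$-padding plays the role of your normalization $\mu(\Gamma)=1$, and the paper's inequality ``$L_v'$ has at least as many non-$h$ terms as $L_v$, plus one'' is exactly your pointwise bound $f(v)\geq 1$. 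What your version buys is transparency and brevity: the contradiction becomes $0\geq 1$, with right-invariance visibly killing the divergence term and condition ($iv$) visibly supplying the mass; the only extra ingredient is the (routine, but worth stating) fact that a finitely additive $[0,1]$-valued measure defined on all subsets integrates simple functions in a well-defined, linear, monotone way. What the paper's version buys is that it never leaves the language of counting and translating sets. One shared caveat, which you correctly flag: both arguments tacitly assume that distinct label--orientation slots at a vertex correspond to distinct neighbors (no parallel edges between a pair of vertices carrying different labels or opposite orientations), since $G$ is a function of vertex pairs and would otherwise be counted twice in the label-indexed decomposition; this holds in the motivating Cayley-graph setting and in the application to $\Gamma_k^l$, so it does not affect the paper's results, but strictly speaking it is an extra hypothesis on ``labeled directed graphs'' that both your proof and the paper's need.
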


\begin{proof}
We begin by associating to each vertex $v$ of $\Gamma$ an ordered list of symbols, taken from the alphabet $\{g_1, g_2, ..., g_n\} \cup \{g_1^{-1}, g_2^{-1}, ..., g_n^{-1}\} \cup \{h\}$.  $h$ has no meaning here except as an ``extra'' symbol to be used in the list.  We shall call this list $L_v$, and construct it as follows:  If $v$ has an outgoing edge labeled by $g_1$, and $G(v, vg_1) > 0$, we begin $L_v$ by repeating the symbol $g_1$ $G(v, vg_1)$ times.  If $v$ has an outgoing edge labeled by $g_2$ and $G(v, vg_2) > 0$, we append $G(v,vg_2)$ copies of $g_2$ to the list.  We continue in this fashion, appending $G(v, vg_i)$ copies of $g_i$ to the list if $v$ has an outgoing edge labeled $g_i$ and $G(v, vg_i) > 0$.  For example, if $v$ has an outgoing $g_1$-edge and $g_3$-edge, but no $g_2$-edge, and $G(v, vg_1) = 2$ and $G(v, vg_3) = 4$, then $L_v$ would begin $(g_1, g_1, g_3, g_3, g_3, g_3, ...)$.

Once this is complete for positive powers, we repeat this process with the inverse symbols using incoming edges, i.e., if $v$ has an incoming edge labeled $g_i$ and $G(v, vg_i^{-1}) > 0$, we append $G(v, vg_i^{-1})$ copies of $g_i^{-1}$ to our list, and repeat this process for each $g_i^{-1}$ in turn.  Intuitively, the lists so far are measuring the ``outgoing flow'' at each vertex.

Since $G$ is bounded by some $K$, these lists so far all have length $\leq 2nK$.  We append copies of $h$ to each list so that each list has length $2nK$.  We now have associated to each $v$ an ordered list $L_v$ of length $2nK$ of symbols from the alphabet $\{g_1, ..., g_n, g_1^{-1}, ..., g_n^{-1}, h\}$.

Using these lists, we will now define a family of vertex subsets of $\Gamma$, which we will call $A_L$, for any ordered list $L$ of length $2nK$ of symbols from the alphabet $\{g_1, ..., g_n, g_1^{-1}, ..., g_n^{-1}, h\}$.  $A_L$ will consist of all vertices $v$ for which $L_v = L$.  Note that many of the sets $A_L$ may be empty.  But since each vertex has a unique list associated to it, we have $\Gamma = \bigcup_L A_L$.

Suppose $\Gamma$ has a right-invariant measure $\mu$.  Then since each $A_L$ is disjoint and their union is the entire graph, we have

$$
\sum_{L} \mu(A_L) = 1
$$

Now we define some more subsets of $\Gamma$, which we call $B_g^j$, for $g$ any letter in the alphabet $\{g_1, ..., g_n, g_1^{-1}, ..., g_n^{-1}, h\}$ and $1 \leq j \leq K$.  $B_g^j$ will be the set of vertices $v$ such that $L_v$ contains $j$ or more copies of $g$.  The $B_g^j$ are certainly not disjoint, however, each $B_g^j$ can be expressed as a disjoint union of some of the $A_L$.  Namely, $B_g^j = \bigcup_{L'} A_{L'}$, where the union is taken over all lists $L'$ that contain $j$ or more occurrences of $g$.  Since we are looking at lists of a fixed length with symbols taken from a finite alphabet, there are finitely many such lists and thus the union is finite.  We therefore have

$$
\sum_{g,j}\mu(B_g^j) = \sum_{g,j} \sum_{L'} \mu(A_{L'})
$$

We claim each list $L$ appears exactly $2nK$ times in the double sum on the right-hand side of the above equation.

To prove this claim, we observe that each entry of $L$ causes $A_L$ to be contained in exactly one of the $B_g^j$.  Namely, the $j^{th}$ occurrence of $g$ ensures that $A_L \subset B_g^j$.  For example, if $L$ starts as $(g_1, g_3, g_3, ...)$, then the $g_1$ term guarantees $A_L \subset B_{g_1}^1$, the first $g_3$ term guarantees $A_L \subset B_{g_3}^1$, and the second $g_3$ term guarantees $A_L \subset B_{g_3}^2$. These are the only $B_g^j$ that will contain $A_L$, by the definition of the $B_g^j$.  Thus, since the lists are of length $2nK$, each $A_L$ appears in an $L'$ sum for exactly $2nK$ of the $B_g^j$, and thus appears a total of $2nK$ times in the above sum, proving the claim.

\bigskip

This allows us to explicitly calculate the sum of the measures of the $B_g^j$:

$$
\sum_{g,j}\mu(B_g^j) = 2nK \sum_{L} \mu(A_L) = 2nK
$$

Now, for $g \neq h$, let $C_g^j = B_g^jg$, i.e., $C_{g_i}^j$ is the set of all vertices with an {\em incoming} edge labeled $g_i$ such that $G(vg_i^{-1}, v) \geq j$, and $C_{g_i^{-1}}^j$ is the set of all vertices $v$ with an {\em outgoing} edge labeled $g_i$ such that $G(vg_i, v) \geq j$.  We define $C_h^j = B_h^j$.  Since each $B_g^j$ is $g$-translatable and its translate is $C_g^j$ for $g \neq h$, we have

$$
\sum_{g,j}\mu(C_g^j) = \sum_{g,j}\mu(B_g^j) = 2nK
$$

Now we construct lists $L_v'$ analogously to the $L_v$, with two major changes:  Firstly, we count ``incoming flow'' instead of ``outgoing flow'': we use incoming edges and $G(vg_i^{-1}, v)$ values with the symbols $g_i$ instead of outgoing edges and $G(v, vg_i)$ values, and similarly we use outgoing edges and $G(vg_i, v)$ values with the $g_i^{-1}$ symbols.  Secondly, we append the $h$'s so that each $L_v'$ has length $2nK+1$, not length $2nK$.  Now as before we define $A_L'$ as the set of vertices $v$ such that $L_v' = L$.  We now have for $g \neq h$, $C_g^j = \bigcup_{L'}A_{L'}'$, where the $L'$ are taken over the set of lists with at least $j$ occurrences of $g$.

\smallskip

However, we cannot say the same about $C_h^j$, since it corresponds to $h$'s in the lists $L_v$, not in the $L_v'$.  But since $G$ is a Ponzi flow, the total  ``incoming flow'' at any vertex $v$ is greater than the total ``outgoing flow'', which means that there are more non-$h$ terms in $L_v'$ than there are in $L_v$ for each $v$ (See Definition \ref{D:PonziFlow}).  This means that $L_v'$ has no more $h$ terms than $L_v$ does.  Thus, if we let $C'^j$ be the set of vertices $v$ such that $L_v'$ contains at least $j$ occurrences of $h$, then $C'^j \subset C_h^j$, and so $\mu(C'^j) \leq \mu(C_h^j)$.  But $C'^j = \bigcup_{L'}A_{L'}'$, where the $L'$ are taken over lists with at least $j$ occurrences of $h$.  Thus we may use the exact same arguments as above to obtain

$$
\sum_{g,j}\mu(C_g^j) \geq \sum_{g,j} \sum_{L'} \mu(A_{L'}'),
$$

where the $L'$ sums are taken over lists of length $2nK + 1$ which have at least $j$ occurrences of $g$.  The same argument as the above claim, however, shows that each $A_{L}'$ occurs precisely $2nK+1$ times in the above sum.  Since $\Gamma$ is a disjoint union of all the $A_L'$, this gives us

$$
\sum_{g,j}\mu(C_g^j) \geq (2nK+1)\sum_L \mu(A_L') = 2nK+1
$$.

Since we previously concluded that $\sum_{g,j}\mu(C_g^j) = 2nK$, we have a contradiction. Thus, no right-invariant measure $\mu$ can exist on $\Gamma$.

\end{proof}

\begin{corollary} \label{C:MeasureZero}
If $\Gamma$ is amenable but contains a nonamenable subgraph $P$, then for any right-invariant measure $\mu$ on $\Gamma$, $\mu(P) = 0$.
\end{corollary}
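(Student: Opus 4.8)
The plan is to argue by contradiction: assume $\mu(P)>0$ and manufacture from $\mu$ a genuine right-invariant measure on $P$ itself. This is impossible, because $P$ is nonamenable, so by Theorem~\ref{T:BW} it carries a Ponzi scheme, hence by Proposition~\ref{P:PSPF} a Ponzi flow, and therefore by Theorem~\ref{T:PonziNoMeasure} admits no finitely additive right-invariant measure at all. The contradiction forces $\mu(P)=0$.

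Before that, I would check that Theorem~\ref{T:PonziNoMeasure} really applies to $P$, i.e.\ that $P$ is a labeled directed graph in the required sense. It has bounded degree, being a subgraph of the bounded-degree graph $\Gamma$; it inherits its edge labels from $\Gamma$; and the condition that each vertex has at most one incoming and one outgoing edge of each label clearly passes to subgraphs. So $P$ is a labeled directed graph, and the preceding paragraph's chain of implications is legitimate.

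Now, assuming for contradiction that $c:=\mu(P)>0$, define $\mu'$ on the power set of the vertex set of $P$ by $\mu'(S)=\mu(S)/c$. Finite additivity and $\mu'(P)=1$ are immediate from the corresponding properties of $\mu$. The only point needing care is right-invariance: if $S\subseteq V(P)$ is $g_i$-translatable \emph{in $P$}, then every vertex of $S$ has an outgoing $g_i$-labeled edge of $P$, which is also an edge of $\Gamma$; hence $S$ is $g_i$-translatable in $\Gamma$, and since the $g_i$-labeled outgoing edge at a vertex is unique, the translate $Sg_i$ computed in $P$ agrees with the translate computed in $\Gamma$, and in particular $Sg_i\subseteq V(P)$. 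Right-invariance of $\mu$ on $\Gamma$ then gives $\mu(S)=\mu(Sg_i)$, so $\mu'(S)=\mu'(Sg_i)$. Thus $\mu'$ is a finitely additive right-invariant measure on $P$, contradicting Theorem~\ref{T:PonziNoMeasure}; hence $\mu(P)=0$.

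The main (indeed only) obstacle is the bookkeeping in the last paragraph: one must be careful that ``$g_i$-translatable'' and the translate operation $S\mapsto Sg_i$ are compatible between $P$ and the ambient $\Gamma$, so that the restriction of $\mu$ is right-invariant with respect to $P$'s own edge structure rather than merely that of $\Gamma$. Once that compatibility is in hand, the rest is a routine normalization and the cited theorems do the work.
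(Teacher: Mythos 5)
Your proposal is correct and follows essentially the same route as the paper: normalize $\mu$ by $\mu(P)$ to get a right-invariant measure on $P$, then contradict Theorem~\ref{T:PonziNoMeasure} via the Ponzi flow guaranteed by nonamenability. Your extra care in checking that $g_i$-translatability and the translate $Sg_i$ are compatible between $P$ and $\Gamma$ is a point the paper passes over with ``will inherit the properties,'' but it is the same argument.
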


\begin{proof}
If $\mu(P) > 0$, then we can define a measure $\mu'$ on $P$ by setting $\mu'(A) = \frac{\mu(A)}{\mu(P)}$ for $A \subset P$.  Since $\mu(P)$ is constant, $\mu'$ will inherit the properties of right invariance and finite additivity from $\mu$, and $\mu'(P) = \frac{\mu(P)}{\mu(P)} = 1$.  But since $P$ is nonamenable it has a Ponzi flow, and \ref{T:PonziNoMeasure} says no such $\mu'$ can exist, yielding a contradiction.
\end{proof}

\section {Large Nonamenable Subgraphs of $F$} \label{S:LNS}

In this section we will prove the main theorem.

We begin by characterizing the two-way forest diagrams of $\Gamma_k^l$.

Given any binary tree on $n$ nodes, we may remove the top caret, giving us a 2-tree forest on $n$ nodes.  If the left of these two trees is nontrivial, we may remove its top caret to obtain a 3-tree forest.  Suppose we continue to remove the top caret of the leftmost tree, until the leftmost tree is trivial.  Applying this process to any tree gives a function $s$ from the set of binary trees on $n$ nodes to the set of binary forests on $n$ nodes with trivial leftmost tree ($s$ is in fact a bijection).  Note that the inverse of $s$ is defined by starting with a forest on $n$ nodes with trivial leftmost tree, adding a caret between the two leftmost trees in the forest, and repeating this process until the entire forest has been combined into a single tree.

\vspace{.18in}
\centerline {
\includegraphics[width=5in]{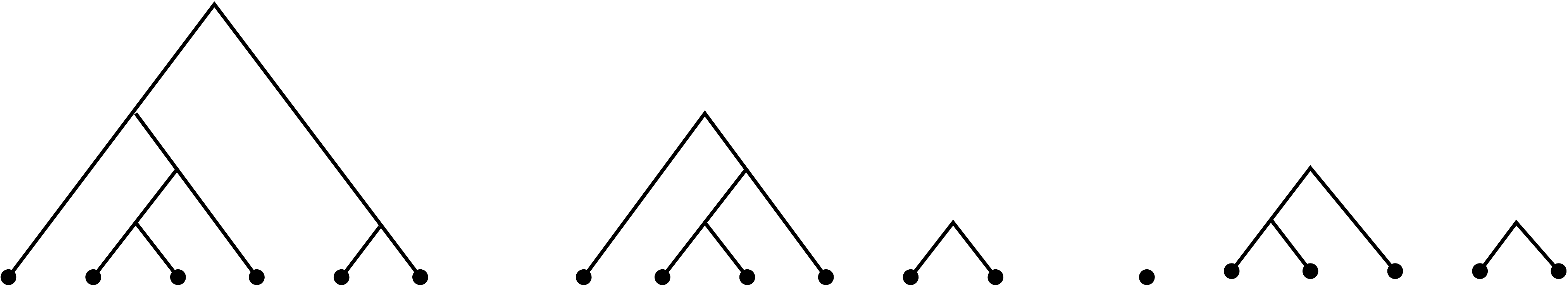}
}
\centerline{Applying $s$ to a tree}
\vspace{.2in}

We extend the definition of $s$ to apply to forests as well as single trees, by applying $s$ separately to each nontrivial tree in the forest. We will define the {\em complexity} of a tree or forest on $n$ nodes to be the minimum number of applications of $s$ required to turn it into into a forest of $n$ trivial trees.

Note that applying $s$ to a tree $T$ leaves a forest whose rightmost tree is the right child of $T$, and the remainder of the forest is $s$ applied to the left child of $T$.  This gives the following:

\begin{proposition} \label{P:calcCpx}

The complexity of a tree is the maximum of the complexity of its left child and one more than the complexity of its right child.

\end{proposition}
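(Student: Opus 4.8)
The plan is to reduce the statement to two elementary facts: the recursive description of $s$ on a tree in terms of its children (already recorded just above the statement), together with the observation that the extended operator $s$ acts on the trees of a forest independently. First I would pin down that the complexity is well defined. Applying $s$ to a forest that is not all-trivial strictly increases its number of trees, since every nontrivial tree gets peeled into at least two trees while trivial trees are left alone; the number of trees is bounded by the number of nodes, which $s$ preserves, so after finitely many applications of $s$ one reaches an all-trivial forest. Since $s$ fixes any all-trivial forest, the complexity is exactly $c(F)=\min\{m:\ s^{m}(F)\text{ is all-trivial}\}$. In particular $c(F)=0$ iff $F$ is all-trivial, and $c(F)=1+c(s(F))$ whenever $F$ is not all-trivial.

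Next I would prove the bookkeeping lemma: if $F$ and $F'$ are forests and $F\cdot F'$ denotes their juxtaposition, then $c(F\cdot F')=\max\bigl(c(F),\,c(F')\bigr)$. This is immediate from the definition of the extended $s$: because $s$ is applied separately to each nontrivial tree, $s^{m}(F\cdot F')=s^{m}(F)\cdot s^{m}(F')$, and this is all-trivial precisely when both $s^{m}(F)$ and $s^{m}(F')$ are, i.e.\ precisely when $m\ge c(F)$ and $m\ge c(F')$.

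Now let $T$ be a nontrivial tree with left child $T_L$ and right child $T_R$ (the statement is vacuous for a trivial tree, which has no children). As noted just before the statement, one application of $s$ to $T$ yields the forest $s(T_L)\cdot T_R$, where $s(T_L)$ is $s$ applied to the single tree $T_L$. Combining this with the two facts above gives $c(T)=1+c\bigl(s(T_L)\cdot T_R\bigr)=1+\max\bigl(c(s(T_L)),\,c(T_R)\bigr)$. If $T_L$ is nontrivial then $c(s(T_L))=c(T_L)-1$, and the right-hand side becomes $\max\bigl(c(T_L),\,c(T_R)+1\bigr)$, as claimed. If $T_L$ is trivial then $c(s(T_L))=c(T_L)=0$, so the right-hand side is $1+c(T_R)=\max\bigl(0,\,c(T_R)+1\bigr)=\max\bigl(c(T_L),\,c(T_R)+1\bigr)$, again as claimed.

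The only place requiring care is the identity $s(T)=s(T_L)\cdot T_R$ and the independence of $s$ across the trees of a forest; both are simply unwound from the definitions, so I expect no genuine obstacle, the one mild subtlety being the separate treatment of the case in which $T_L$ is trivial.
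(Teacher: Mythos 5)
Your proof is correct and follows essentially the same route as the paper, which states the proposition as an immediate consequence of the observation that $s(T)$ is $s$ applied to the left child juxtaposed with the right child; you have simply made explicit the bookkeeping (well-definedness of complexity, the identity $c(F\cdot F')=\max(c(F),c(F'))$, and the trivial-left-child case) that the paper leaves to the reader.
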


We record here a basic property of $s$:

\begin{proposition} \label{P:skills}

Let $T$ be a tree on $n$ nodes, and let $R$ be an $n$-tree forest.  Denote by $RT$ the tree obtained by attaching the roots of $R$ to the nodes of $T$.  If $T$ has complexity $j$, then $s^j(RT)$ consists only of carets in $R$, i.e., every caret from $T$ is removed by $s^j$.

\end{proposition}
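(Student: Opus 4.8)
The plan is to induct on the number of carets of $T$, letting Proposition \ref{P:calcCpx} do the complexity bookkeeping. For the base case $T$ is trivial, so $j=0$ and $RT$ is simply the unique tree of $R$, every caret of which lies in $R$; since $s^0$ is the identity there is nothing to prove.

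For the inductive step, write $T$ as a caret joining its left subtree $A$ to its right subtree $B$. I would split the forest $R$ as $R = R^A * R^B$ (writing $*$ for concatenation of forests), where $R^A$ lists the trees of $R$ glued to the leaves of $A$ and $R^B$ those glued to the leaves of $B$; then $RT$ is exactly the caret joining the left child $R^A A$ to the right child $R^B B$. By Proposition \ref{P:calcCpx}, $j = \max\{\,\mathrm{cpx}(A),\ 1+\mathrm{cpx}(B)\,\}$, so $j \geq 1$, $\mathrm{cpx}(A) \leq j$, and $\mathrm{cpx}(B) \leq j-1$. Now I would invoke the observation recorded immediately before Proposition \ref{P:calcCpx}: a single application of $s$ to the tree $RT$ yields the forest $s(R^A A) * (R^B B)$ — it processes the left child exactly as $s$ acts on $R^A A$ and leaves the right child alone as the last tree. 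Because $s$ acts on a forest tree by tree, it commutes with concatenation, so iterating gives
$$
s^j(RT) \;=\; s^{j-1}\!\bigl(s(R^A A)*(R^B B)\bigr) \;=\; s^j(R^A A)\ *\ s^{j-1}(R^B B).
$$

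It then remains to treat the two factors. Applying the inductive hypothesis to $A$ with the attached forest $R^A$ (and its complexity $\mathrm{cpx}(A)\le j$) shows that $s^{\mathrm{cpx}(A)}(R^A A)$ already consists only of carets of $R^A$; since $s$ only deletes carets and never creates them, this persists through the remaining $j-\mathrm{cpx}(A)\geq 0$ applications, so $s^j(R^A A)$ consists only of carets of $R^A$. The identical argument applied to $B$ and $R^B$ (using $\mathrm{cpx}(B)\leq j-1$) shows that $s^{j-1}(R^B B)$ consists only of carets of $R^B$. Concatenating, $s^j(RT)$ consists only of carets of $R^A * R^B = R$, completing the induction.

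The only real work is the bookkeeping in the inductive step: identifying how $R$ partitions when $T$ is cut at its root, and checking that under $s$ the two sides of $RT$ evolve independently — which is precisely the statement that $s$ on a forest is computed tree by tree and hence commutes with concatenation. Once that is in place, Proposition \ref{P:calcCpx} supplies all the numerical content, and the trivial remark that $s$ creates no new carets lets us pad the exponents $\mathrm{cpx}(A)$ and $\mathrm{cpx}(B)$ up to $j$ and $j-1$ at no cost; I do not expect a genuine obstacle beyond stating these observations carefully.
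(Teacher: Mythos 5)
Your proof is correct, but it takes a genuinely different route from the paper. You argue by structural induction on $T$: cut $T$ at its root caret into children $A$ and $B$, split $R$ into the corresponding subforests $R^A$ and $R^B$, use the observation preceding Proposition \ref{P:calcCpx} to get $s^j(RT)=s^j(R^AA)*s^{j-1}(R^BB)$, and close the induction with the complexity bounds from Proposition \ref{P:calcCpx} plus the remark that extra applications of $s$ do no harm. The paper instead gives a direct, non-inductive characterization of which carets $s^j$ removes: tracking the path from a caret to the root, it shows that one application of $s$ removes exactly the carets whose path uses only right-moves and decrements the number of left-moves on every surviving caret's path, so $s^j$ removes precisely the carets with at most $j-1$ left-moves on their path to the root; since attaching $R$ below the leaves of $T$ does not change these paths for the carets of $T$, they are all removed by $s^j$. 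Your induction is more formal and self-contained (everything reduces to the two recorded recursions for $s$ and for complexity), while the paper's argument yields a sharper byproduct --- an exact criterion for when an individual caret survives $s^j$ --- at the cost of a somewhat more informal path-counting discussion. Both are sound; the only points worth stating carefully in your version are the ones you already flag, namely that $s$ on a forest is computed tree by tree (so it commutes with concatenation) and that $s$ never creates carets.
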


\vspace{.2in}
\centerline {
\includegraphics[width=3in]{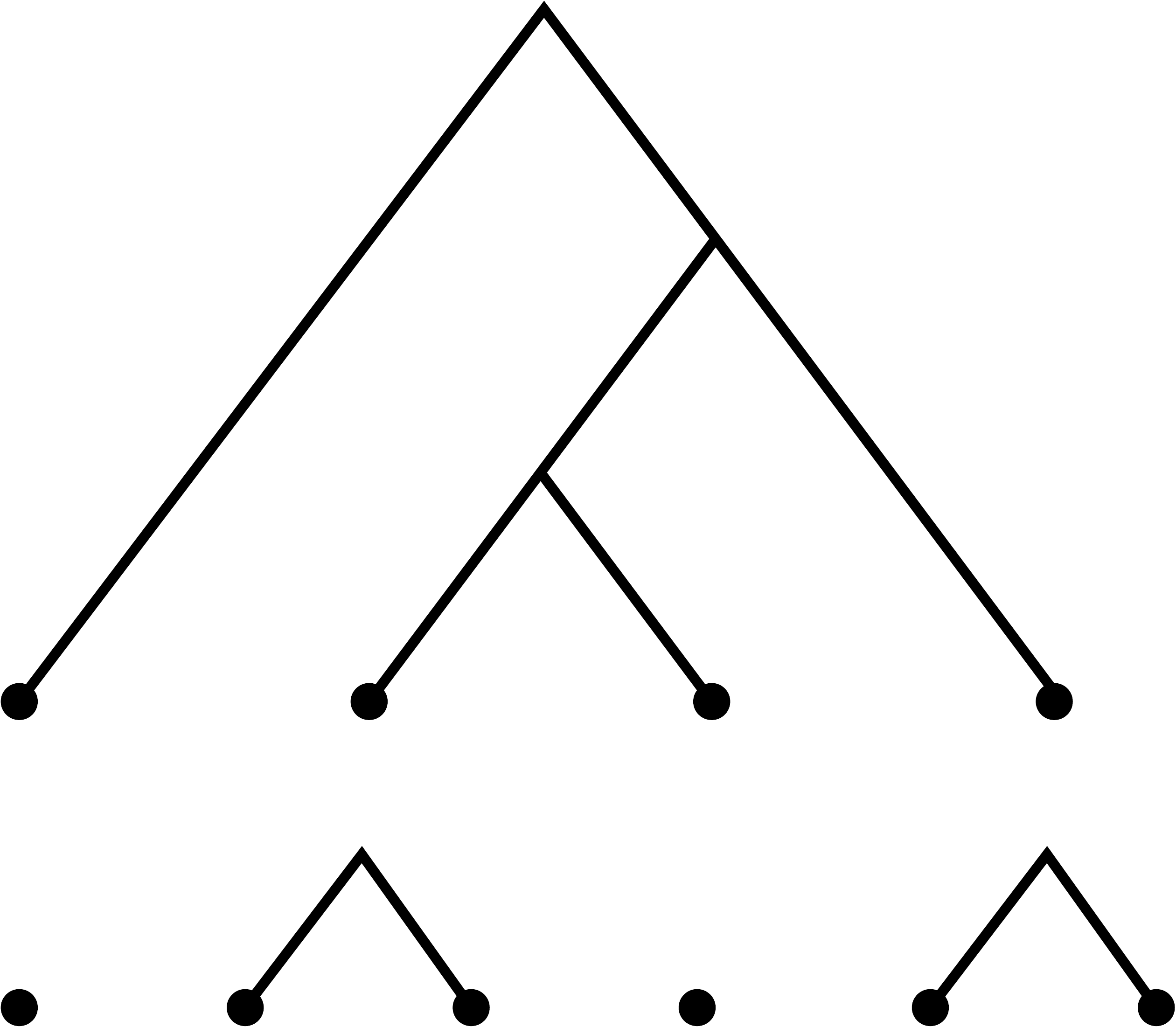}
}
\centerline {A 4-node tree $T$ of complexity 2 and a 4-tree forest $R$}

\vspace{.2in}
\centerline {
\includegraphics[width=3in]{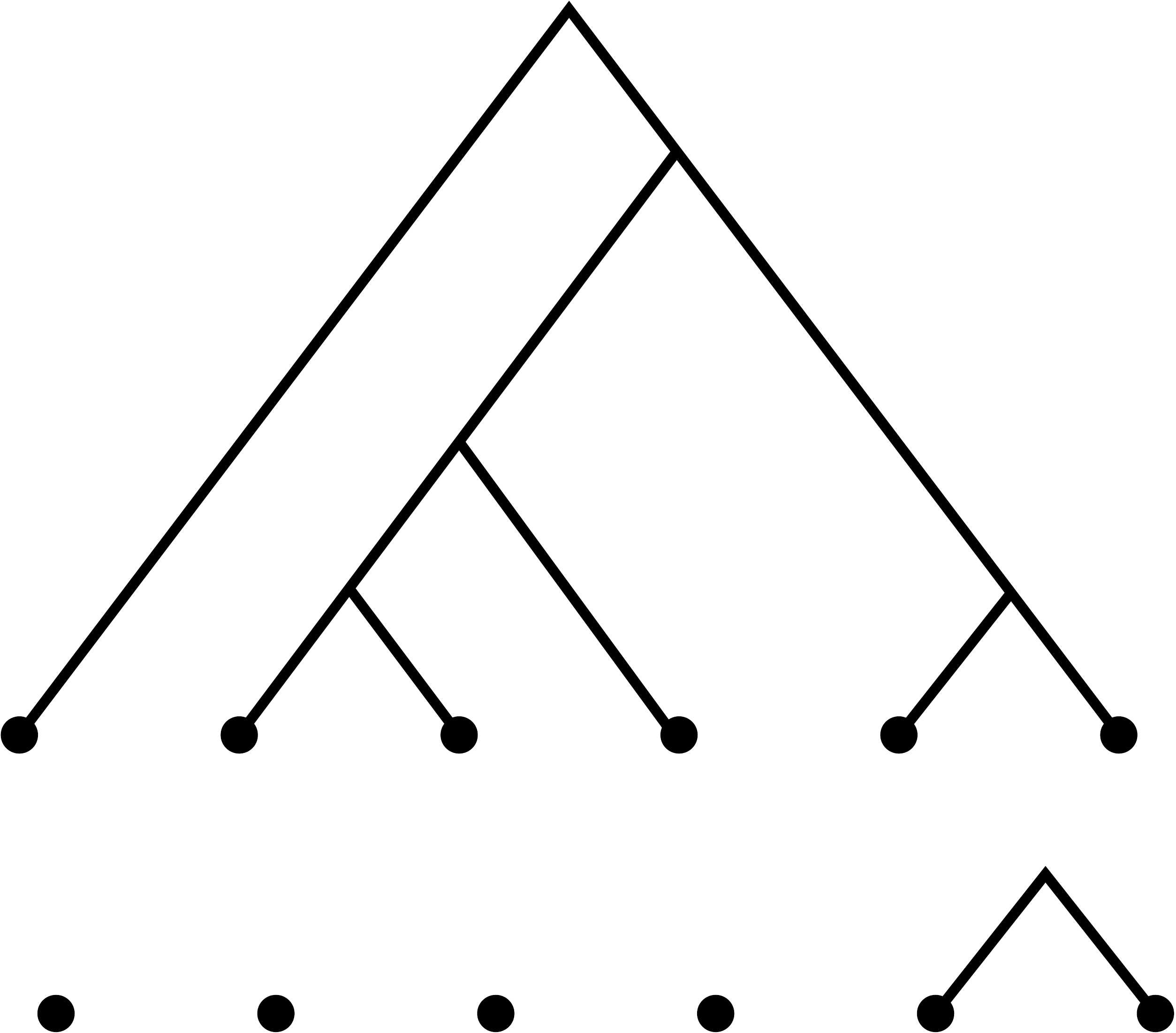}
}
\centerline {$RT$ and $s^2(RT)$}

\bigskip

\begin{proof} This is easy to see, as we can determine whether a caret is removed by $s^j$ by examining its relationship with those above it.  Namely, when we examine the unique path from a caret to the root of the tree, it consists of moves to the right and moves to the left.  An application of $s$ removes all carets whose path consists only of moves to the right. Further, any caret's path to the root hits the left edge at some point, and consists only of moves to the right afterwards.  After $s$ is applied, the path ends the move before reaching the left edge (which was a move to the left).  Thus $s$ leaves each new path from a remaining caret to the root of its new tree with one less move to the left.  So $s^j$ removes all carets whose paths consist of $j-1$ or fewer moves to the left.  Since this property is unchanged in the carets of $T$ whether or not it sits on $R$, the effect of $s^j$ is the same on carets of $T$, whether or not we consider it a top-tree of $R$.
\end{proof}

For a positive integer $j$, we define a function $\phi_j$ from pointed forests to forests in the following way:  Apply $s$ to the pointed tree and every tree to its left $j$ times .  Apply $s$ $(j-q)$ times to the tree that is $q$ to the right of the pointed tree.  That is to say, apply $s$ ($j-1$) times to the tree to the immediate right of the pointed tree, $j-2$ times to next tree to the right, etc.

\begin{proposition} \label{P:charGam}

A pointed forest $P$ lies in $\Gamma_k^l$ if and only if $\phi_l(P)$ has $k$ or fewer carets.

\end{proposition}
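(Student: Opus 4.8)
The idea is to use $\phi_l$ to turn the defining condition on $\Gamma_k^l$ into a caret count, exploiting the analysis already present in the proof of Proposition~\ref{P:skills}: $s^j$ deletes exactly those carets whose path to the root of their tree contains at most $j-1$ left moves. So, writing $T_q$ for the tree of $P$ at position $q$ relative to the pointer (with $q\le 0$ for the pointed tree and everything left of it), and setting $j_q=l$ for $q\le 0$ and $j_q=\max(0,l-q)$ for $q>0$, the forest $\phi_l(P)$ is the disjoint union of the forests $s^{j_q}(T_q)$, and the number of carets of $\phi_l(P)$ equals the number of carets of $P$ that \emph{survive}, i.e.\ carets of $T_q$ whose path to the root of $T_q$ has at least $j_q$ left moves. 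Equivalently (by the definition of complexity together with Proposition~\ref{P:calcCpx}), $T_q$ has no surviving caret iff its complexity is at most $j_q$; call the conjunction of these bounds over all $q$ the \emph{$l$-bound}. The claim to prove is then: $P\in\Gamma_k^l$ iff $P$ has at most $k$ surviving carets.

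First I would settle the case $k=0$: $\phi_l(P)$ has no caret iff $P$ is a positive word in $\{x_0,\dots,x_l\}$. For ``$\Leftarrow$'' induct on word length; the identity obeys the $l$-bound, and right multiplication by $x_0$ (move the pointer one step right) or by $x_i$ with $1\le i\le l$ (insert a caret at position $i-1$ whose children are the old trees at positions $i-1$ and $i$) each preserves the $l$-bound, a short computation using Proposition~\ref{P:calcCpx}. For ``$\Rightarrow$'' reduce $P$ to the identity: if some tree at a position $q\ge 0$ is non-trivial, take $q$ maximal; since a non-trivial tree at position $\ge l$ would have complexity $>j_q$, we get $q\le l-1$, and peeling off the root caret of $T_q$ writes $P=P''x_{q+1}$ with $q+1\le l$ and $P''$ still obeying the $l$-bound (the trees right of position $q$ being trivial, nothing else is disturbed). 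Otherwise the pointed tree and all trees to its right are trivial but $P$ is not the identity, so the pointer is not leftmost and $P=P''x_0$ with $P''$ still obeying the $l$-bound. This terminates (the pair (number of carets, distance of the pointer from the left end) strictly decreases), and every step strips off a letter from $\{x_0,\dots,x_l\}$.

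For the forward direction of the general statement, write $P=wv$ with $v$ a positive word in $\{x_0,\dots,x_l\}$ and $|w|\le k$. In the two-way forest picture $v$ sits on top and $w$ at the bottom, so $T_q=R_q\bar T_q$, where $\bar T_q$ is the tree of $v$ at position $q$ and $R_q$ is the sub-forest of $w$ dangling from its leaves. By the $k=0$ case $\bar T_q$ has complexity at most $j_q$, so Proposition~\ref{P:skills} gives that $s^{j_q}(T_q)$ retains only carets of $R_q$; summing over $q$, $\phi_l(P)$ has at most as many carets as $w$, hence at most $|w|\le k$.

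The converse is the crux. The survival criterion is closed under passing to a child of a caret (one extra move cannot drop the left-move count below $j_q$), so inside each $T_q$ the non-surviving carets form a subtree $N_q$ through the root, with all-surviving subtrees (or trivial trees) dangling from its leaves; moreover each $N_q$ has complexity at most $j_q$. Hence, giving $\{N_q\}$ the pointer inherited from $P$, the pointed forest $N=\{N_q\}$ obeys the $l$-bound and so is a positive word in $\{x_0,\dots,x_l\}$ by the $k=0$ case, while $P$ is exactly $N$ with the forest $S$ of dangling subtrees plugged into its leaves: $P=S\cdot N$. Finally $S$ has at most $k$ carets (one per surviving caret of $P$), and its pointed tree is forced to be its leftmost tree, so $S$ has no non-trivial tree to the left of its pointer; an easy induction on carets then shows that any such pointed forest is a positive word whose length equals its number of carets, hence $\le k$. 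Taking $w=S$ and $v=N$ yields $P=wv\in\Gamma_k^l$. The step I expect to require the most care is precisely this last one: verifying that the ``skeleton $N$ plus dangling subtrees $S$'' decomposition genuinely equals the forest product $S\cdot N$ of \emph{pointed} forests — in particular that the pointer of $S$ lands on its leftmost tree and that the $l$-bound transfers correctly to $N$ — and keeping the caret bookkeeping honest throughout.
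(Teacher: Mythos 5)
Your proof is correct and follows the paper's strategy almost exactly: the same $k=0$ induction combined with Proposition \ref{P:skills} for the forward direction, and the same factorization of $P$ into the forest of surviving carets (your $S$, the paper's $w$) stacked under the skeleton of removed carets (your $N$, the paper's $v=w^{-1}P$) for the converse. The only divergence is that where the paper invokes Proposition \ref{P:buildT} to build each skeleton tree bottom-up by induction on complexity, you peel letters off top-down (stripping the root caret of the rightmost nontrivial tree at a nonnegative position, or an $x_0$, with a lexicographic termination measure); both arguments are sound.
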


{\em Proof of $\Rightarrow$}:  Let $P \in \Gamma_k^l$.  First suppose that $k=0$.  In this case $P$ can be expressed as a word $v$ in $\{x_0,...,x_l\}$, and the proposition says it is annihilated by $\phi_l$, i.e., $\phi_l(P)$ consists only of trivial trees.  We proceed by induction on the length of $v$.  Clearly, $vx_0$ is annihilated by $\phi_l$ if $v$ is, since $\phi_l(vx_0)$ is a subforest of $\phi_l(v)$ (each tree has $s$ applied to it either the same number of times or one more time, since the pointer has simply moved one tree to the right).

For $0 < i \leq l$, multiplying by $x_i$ adds a caret to the right of the tree $i-1$ trees from the pointer.  By \ref{P:calcCpx}, this will increase the number of applications of $s$ required to make that tree trivial only if its right child has complexity greater than or equal to its left child.  In this case, the new tree's complexity will be one greater than that of its right child.  Since $i \leq l$, the right child was $i$ trees to the right of the caret, and by induction the right child had complexity no more than $(l-i)$.  Thus the new tree has complexity no more than $(l-i+1)$.  Since this new tree is $i-1$ trees to the right of the pointer, it is still annihilated by $\phi_l$.  The trees to the left of the new caret are unchanged, and the trees to the right of the caret have each been brought 1 tree closer to the pointer since two intervening trees have been merged.  These trees now have complexity $\leq \max(l - d - 1, 0)$, where $d$ is their distance from the pointed tree, since they were distance $d+1$ before the caret was added and were made trivial by the application of $\phi_l$.  This means that they will still be annihilated by $\phi_l$, and so the new pointed forest is still turned into a trivial forest by $\phi_l$.

The above argument shows that $\phi_l(v)$ is trivial if $v \in \Gamma_0^l$.  But if we take a $w$ of length $\leq k$ as in the theorem, $wv$ uses the trees of $w$ as the leaves for the trees of $v$.  Thus each all the carets added in each tree of $v$ are still removed by $\phi_l$ by \ref{P:skills}, since $s$ is applied the same number of times to each tree.  Thus $\phi(wv)$ has at most as many carets as $w$, i.e., $k$ or fewer. \qed

To prove the reverse direction of \ref{P:charGam}, we will use the following proposition:

\begin{proposition} \label{P:buildT}

A pointed forest diagram consisting of a single nontrivial tree $T$ of complexity $j$ in the leftmost position, with the pointer on that tree, can be expressed as as word in $x_1, ..., x_j$.

\end{proposition}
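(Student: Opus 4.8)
The plan is to induct on the number of carets of $T$. If $T$ is a single caret, both children are trivial, $T$ has complexity $1$, and $T$ is realized by the one-letter word $x_1$; this is the base case. For the inductive step I would let $L$ and $R$ be the left and right children of $T$. By Proposition \ref{P:calcCpx} we have $j = \max(\mathrm{cpx}(L),\, 1 + \mathrm{cpx}(R))$, hence $\mathrm{cpx}(L) \le j$ and $\mathrm{cpx}(R) \le j-1$. Since $L$ and $R$ each have fewer carets than $T$, the inductive hypothesis supplies a word $u$ in $x_1,\dots,x_j$ expressing $L$ as the leftmost pointed tree and a word $w$ in $x_1,\dots,x_{j-1}$ expressing $R$ as the leftmost pointed tree (taking $u$, resp.\ $w$, to be the empty word if $L$, resp.\ $R$, is trivial). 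The goal is then to assemble $T$ from these two words together with one extra $x_1$, without ever using $x_0$, so that the pointer never leaves the leftmost position.

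The device that makes this work is a shift observation: if $w'$ denotes the word obtained from $w$ by replacing each letter $x_i$ with $x_{i+1}$, then $w'$ is a word in $x_2,\dots,x_j$, and $w'$ builds $R$ in the tree-position immediately to the right of the pointer while leaving the pointed tree itself trivial. Concretely, writing $\sigma$ for the operation of prepending a trivial tree to a pointed forest (with the pointer kept on the new, trivial, leftmost tree), one checks from $x_{i+1} = x_0^{\,i} x_1 x_0^{-i}$ that $\sigma(F\cdot x_i) = \sigma(F)\cdot x_{i+1}$ for all $i\ge 1$ and that $\sigma$ fixes the identity; applying this repeatedly gives $\sigma(e\cdot w) = e\cdot w'$, so $e\cdot w'$ is exactly the pointed forest with $R$ in position $1$, position $0$ trivial, and everything else trivial. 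Because $w'$ uses only $x_2,\dots,x_j$, every caret it inserts lies between positions $\ge 1$, so $w'$ never touches whatever tree sits in position $0$.

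Putting this together, I would claim that the word $u\,w'\,x_1$ expresses $T$: running $u$ from the identity puts $L$ in position $0$ with all later positions trivial; running $w'$ then builds $R$ in position $1$ without disturbing position $0$, leaving the forest with $L$ in position $0$ and $R$ in position $1$; and a final $x_1$ inserts a caret between the pointed tree $L$ and the tree $R$ to its right, producing the single tree with left child $L$ and right child $R$, namely $T$, with the pointer on it in the leftmost position. Every letter used is one of $x_1,\dots,x_j$, closing the induction. I expect the only genuinely delicate point to be the shift observation --- in particular verifying that $w'$ operates entirely on positions $\ge 1$ and so can be performed after $u$ without interfering with the copy of $L$ already in place; the rest is just keeping track of which pair of trees each generator joins.
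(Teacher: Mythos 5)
Your proof is correct, and at its core it uses the same mechanism as the paper: build a subtree one position to the right of the pointer by shifting every generator $x_i$ to $x_{i+1}$ (equivalently, conjugating by $x_0$), then attach it with a single $x_1$; the verification that the shifted word operates only on positions $\geq 1$ and so cannot disturb the tree already sitting under the pointer is exactly the point the paper also relies on. The organization differs, though, in a way worth noting. The paper decomposes $T$ along its left spine via $s$, writing $s(T)$ as a forest $(\bullet, T_1,\dots,T_n)$ with every $T_i$ of complexity $\leq j-1$, and then iterates ``build $T_i$ at position $1$, apply $x_1$''; this makes induction on the complexity $j$ work directly, since every recursive call strictly drops the complexity. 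You instead peel off only the top caret, $T = L \wedge R$, and recurse on both children; since $L$ can have the same complexity $j$ as $T$ (only $R$ is guaranteed to drop to $\leq j-1$ by Proposition \ref{P:calcCpx}), induction on complexity alone would not terminate, and you correctly switch the induction variable to the number of carets. If one unrolls your recursion on the left child all the way down the left spine, the word $u\,w'\,x_1$ expands into precisely the paper's word, so the two constructions produce the same element; your version is arguably cleaner to state (one recursive scheme, one attaching $x_1$ per caret on the left spine), at the cost of needing the caret-count induction and the explicit $\sigma$-intertwining lemma, which you state and justify correctly.
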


\begin{proof} We will proceed by induction on $j$.  It is clear that $s(T)$ is a forest with trivial leftmost tree and the remaining trees $T_1, ... ,T_n$ of complexity $\leq j-1$.  By induction, let $u$ be a word in $x_1, ... , x_{j-1}$ that creates the forest diagram with only $T_1$.  Now consider $x_0ux_0^{-1}$.  Since $x_0$ moves the pointer one to the right and $x_0^{-1}$ moves it one to the left, $x_0ux_0^{-1}$ represents creating $T_1$ one tree to the right of the pointer (leaving the pointed tree trivial).  But since $x_i = x_0x_{i-1}x_0^{-1}$, by inserting $x_0^{-1}x_0$ between each letter in $u$ we can rewrite $x_0ux_0^{-1}$ as a word in $x_2,...,x_j$.  Applying $x_1$ then creates the caret from the leftmost node to this tree (this would be the last caret removed by the first application of $s$ to $T$). We may now repeat this process for the remaining trees:  Multiply by some $x_0ux_0^{-1}$ to create $T_i$ one tree to the right of the pointer, and then multiply by $x_1$ to attach the caret removed by the first application of $s$ (this process is illustrated below).  This constructs the entire tree $T$.
\end{proof}

\vspace{.2in}
\centerline {
\includegraphics[width=4.3in]{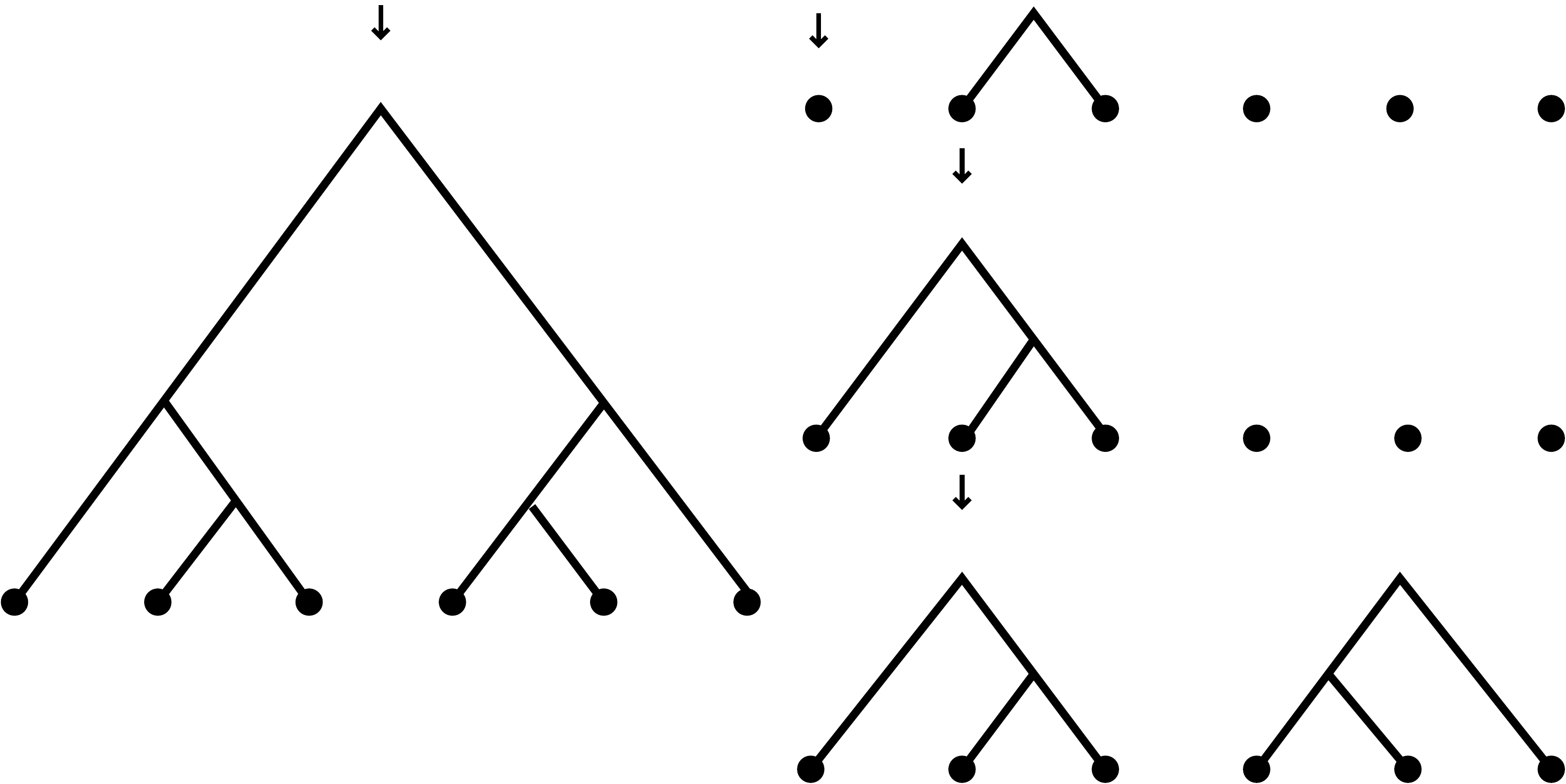}
}
{\center \small To construct the tree $T$ on the left, we construct the first nontrivial tree in $s(T)$ to the right of the pointer (top right), then multiply by $x_1$ (middle right), then construct the next tree in $s(T)$ to the right of the pointer (bottom right), then multiply by $x_1$.}

\vspace{.2in}

{\em Proof of Proposition \ref{P:charGam}, $\Leftarrow$:} Suppose that $P$ is a pointed forest such that $\phi_l(P)$ has $k$ or fewer carets. We can then create $w$ to put these carets in place without moving the pointer (the generator $x_i$ creates a caret on the $i^{th}$ tree without moving the pointer).

Consider the element $w^{-1}P$.  This is the pointed forest obtained by taking the trees in $P$ which remain after applying $\phi_l$, and replacing them with trivial trees:

\vspace{.2in}
\centerline {
\includegraphics[width=4.3in]{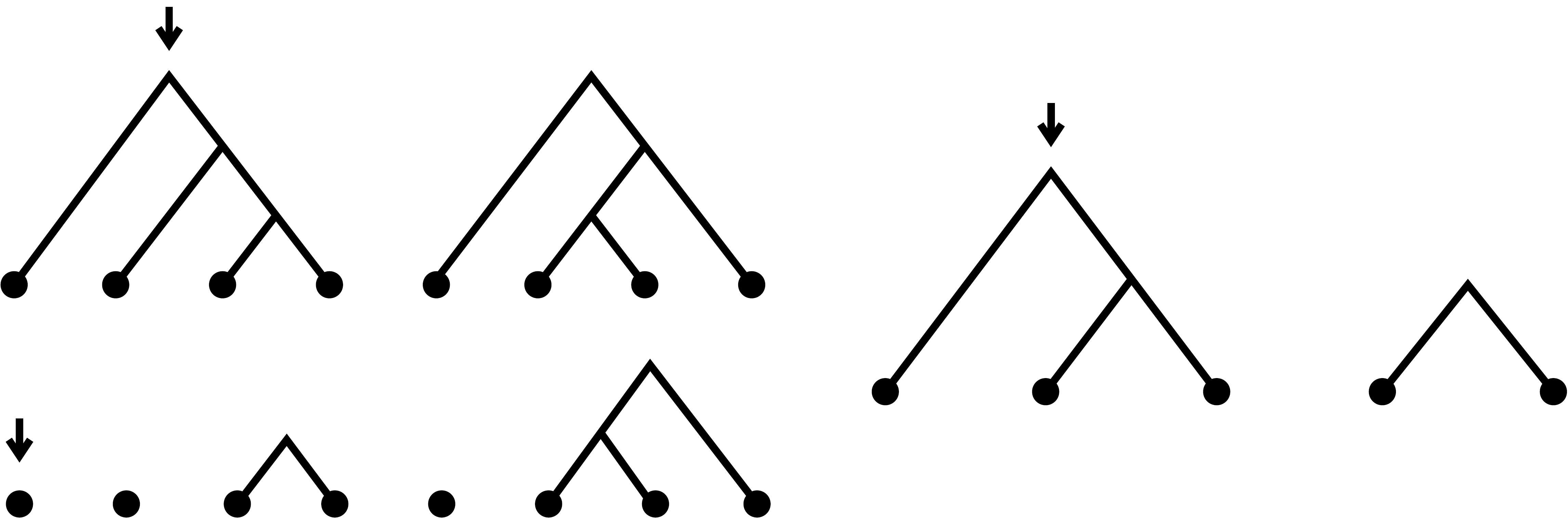}
}

{\center \small For $l = 2$, if $P$ is the forest in the top left then $w$ is $\phi_2(P)$ with the pointer on the first tree (bottom left), and $w^{-1}P$ is shown on the right.}

\bigskip

The resulting pointed forest is then annihilated by $\phi_l$, and so each tree under or to the left of the pointer has complexity at most $l$.  Thus we may construct these trees as words in $x_0,...,x_l$ using \ref{P:buildT} and inserting $x_0$ between each word, which will result in building the first tree, moving the pointer to the right, building the next tree, etc.  Further, the tree that is $j$ trees to the right of the pointer has complexity at most $l-j$, and so \ref{P:buildT} says we can construct it as $x_0^jux_0^{-j}$, where $u$ is a word in $x_1,...,x_{l-j}$.  As above, we then insert $x_0^jx_0^{-j}$ between each letter of $u$, which allows us to rewrite it as a word in $x_{j+1},...,x_l$.  Repeating this for each $j$ and appending these words in increasing order constructs all trees to the right of the pointer.  This completes the construction of $w^{-1}P$ as a word in $x_0,...,x_l$, which we will call $v$.  Thus, $P = ww^{-1}P = wv$, and the proof is complete. \qed

\bigskip

{\em Proof of Theorem \ref{T:main}:} Let $F$ be the underlying forest of a pointed forest in $\Gamma^l_k$.  Let $Q$ be the pointed forest with underlying forest $F$ whose pointer is as far to the left as possible while still remaining in $\Gamma^l_k$.  Note that applying $\phi_l$ to $Q$ affects at most $l$ trees under or to the right of the pointer.  Thus, by \ref{P:charGam} there are at most $k+l$ nontrivial trees to the right of the pointer in $P$, otherwise, $\phi_l(Q)$ would have more than $k$ nontrivial trees and thus certainly have more than $k$ carets.

For any $P \in \Gamma^l_k$, let $c_P$ be the number of nontrivial trees $T$ to the left of the pointed tree such that if the pointer is moved to $T$ the resulting forest remains in $\Gamma^l_k$.  By the preceding paragraph $c_P$ is never more than $k+l$.  Now define $G: \Gamma^l_k \times \Gamma^l_k \rightarrow \mathbb{Z}$ as follows:  For each pointed forest $P$,

\bigskip

-If $Px_0^{-1} \in \Gamma^l_k$, set $G(P, Px_0^{-1}) = c_P$ and $G(Px_0^{-1}, P) = -c_P$.

\smallskip

-If the pointed tree in $P$ is nontrivial, and $Px_1^{-1} \in \Gamma_k^l$, set $G(P, Px_1^{-1}) = 1$ and $G(Px_1^{-1}, P) = -1$.

\smallskip

-For all other pairs $(P, P')$, set $G(P, P') = 0$.

\bigskip

Now it is clear from the definition that $G$ satisfies conditions ($i$) and ($ii$) in Definition \ref{D:PonziFlow}, and since $c_P \leq k+l$ for each $P$, $G$ also satisfies condition ($iii$).  It thus remains only to check condition ($iv$).  So we shall consider a pointed forest $P \in \Gamma^l_k$.

Note that $G(Px_1, P) + G(Px_1^{-1}, P) \geq 0$.  Note also that that $G(Px_0, P) + G(Px_0^{-1}, P) \geq 0$, since any tree to the left of of the pointed tree in $P$ is also to the left of the pointed tree in $Px_0$, so $c_P \leq c_{Px_0}$.  Further, since the only tree counted in $c_{Px_0}$ but not in $c_P$ is the pointed tree in $P$, we have that $G(Px_0, P) + G(Px_0^{-1}, P) > 0$ exactly when the pointed tree in $P$ is nontrivial.  Thus condition (iv) is satisfied for all $P$ where the pointed tree is nontrivial.  If the pointed tree in $P$ is trivial, $G(Px_1, P) = 1$ and $Px_1^{-1}$ is not in $\Gamma^l_k$, so $\sum_{P'\in \Gamma^l_k} G(P',P) = G(Px_0, P) + G(Px_0^{-1}, P) + G(Px_1, P) \geq 1.$  Thus condition ($iv$) holds for all pointed forests where the pointed tree is trivial as well.  \qed

We close with some immediate corollaries:

\begin{corollary}
If $F$ is amenable, then for any right-invariant measure $\mu$, $\mu(\Gamma^l_k) = 0$.
\end{corollary}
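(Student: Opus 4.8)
The plan is to obtain this as an immediate consequence of Theorem~\ref{T:main} together with Corollary~\ref{C:MeasureZero}. First I would note that $\Gamma^l_k$ is, by its very definition, a subgraph of the Cayley graph $\Gamma$ of $F$ with respect to the generating set $\{x_0,x_1\}$: every vertex of $\Gamma^l_k$ lies in the positive monoid, and each multiplication by a generator $x_i = x_0^{i-1}x_1x_0^{-(i-1)}$ is realized by a path in $\Gamma$, so the induced subgraph structure is inherited from $\Gamma$. Next, if $F$ is amenable then F{\o}lner's criterion holds for $F$, and since that criterion for a group is literally a statement about its (right) Cayley graph, $\Gamma$ is an amenable graph.

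I would then apply Corollary~\ref{C:MeasureZero} with the ambient graph taken to be $\Gamma$ and the subgraph taken to be $P = \Gamma^l_k$. By Theorem~\ref{T:main} the subgraph $P$ is not amenable, so the hypotheses of the corollary are satisfied, and it yields $\mu(\Gamma^l_k) = 0$ for every right-invariant measure $\mu$ on $\Gamma$. Finally I would observe that a right-invariant measure on the group $F$ is in particular a right-invariant measure on its Cayley graph $\Gamma$ in the sense of Section~\ref{S:UFH}: in a full Cayley graph every vertex set is $g_i$-translatable for every $i$, so the graph-theoretic notion of right-invariance reduces to the usual group-theoretic one. Combining these observations gives $\mu(\Gamma^l_k)=0$ for any right-invariant $\mu$ on $F$, as claimed.

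I do not anticipate any genuine obstacle: all the substance lives in Theorem~\ref{T:main} and in the Ponzi-flow machinery of Section~\ref{S:UFH} (in particular Theorem~\ref{T:PonziNoMeasure}, which underlies Corollary~\ref{C:MeasureZero}), and this corollary is merely an assembly of those pieces. The only point deserving an explicit line is the bookkeeping that the measure-theoretic hypothesis of Corollary~\ref{C:MeasureZero} — namely that $\mu$ is right-invariant on $\Gamma$ — is exactly what a right-invariant measure on $F$ restricts to, which is immediate from the remarks following the definition of right-invariance. (One could even bypass the amenability of $\Gamma$ altogether, since if $\mu(\Gamma^l_k)>0$ then the normalization $A \mapsto \mu(A)/\mu(\Gamma^l_k)$ would be a right-invariant probability measure on the nonamenable graph $\Gamma^l_k$, directly contradicting Theorem~\ref{T:PonziNoMeasure}; but citing Corollary~\ref{C:MeasureZero} is cleaner.)
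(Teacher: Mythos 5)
Your proposal is correct and follows the same route as the paper: invoke Theorem~\ref{T:main} to get nonamenability (equivalently, a Ponzi flow) on $\Gamma^l_k$, then apply Corollary~\ref{C:MeasureZero} with the ambient graph the Cayley graph of $F$. The paper's proof is just a terser version of this, and the bookkeeping you spell out (that a right-invariant measure on $F$ is a right-invariant measure on its Cayley graph in the sense of Section~\ref{S:UFH}) is exactly the implicit content there.
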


\begin{proof}
By Theorem \ref{T:main}, $\Gamma^l_k$ has a Ponzi flow, and thus by \ref{C:MeasureZero} it always has measure zero.
\end{proof}

\begin{corollary}
If $F$ is amenable, then for any right-invariant measure $\mu$, and any finitely generated submonoid $P'$ of $P$, $\mu(P') = 0$.
\end{corollary}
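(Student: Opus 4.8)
The plan is to reduce this to the preceding corollary by showing that every finitely generated submonoid of $P$ is contained in some $\Gamma_k^l$. First I would fix a finite generating set $g_1,\dots,g_m \in P$ for $P'$. Each $g_i$ is by definition a positive word in $\{x_0,x_1,x_2,\dots\}$, so only finitely many of the generators $x_i$ occur among $g_1,\dots,g_m$; let $l$ be large enough that every $g_i$ is a positive word in $\{x_0,\dots,x_l\}$. Then an arbitrary element of $P'$ is a product $g_{i_1}\cdots g_{i_r}$, hence itself a positive word in $\{x_0,\dots,x_l\}$.

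Next I would note that any positive word $v$ in $\{x_0,\dots,x_l\}$ lies in $\Gamma_1^l$: in the expression $wv$ appearing in the definition of $\Gamma_k^l$, take $w$ to be the empty word, which has length $0 \leq 1$. Therefore $P' \subseteq \Gamma_1^l$ as subsets of the vertex set of the Cayley graph of $F$.

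Finally I would invoke the previous corollary: if $F$ is amenable, then $\mu(\Gamma_1^l)=0$ for every right-invariant measure $\mu$. Since a nonnegative finitely additive measure is monotone --- if $A \subseteq B$ then $\mu(A) = \mu(B) - \mu(B\setminus A) \leq \mu(B)$ --- we conclude $0 \leq \mu(P') \leq \mu(\Gamma_1^l) = 0$, so $\mu(P') = 0$.

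I expect no genuine obstacle here: the statement is essentially the main theorem combined with the observation that finitely many positive words involve only boundedly many of the $x_i$. The only points needing (routine) care are that the elements of $P'$ are positive words in a fixed finite subset of the generators, which is immediate since the generating set is finite, and monotonicity of finitely additive measures, which is standard; it is also worth explicitly checking, as above, that an ordinary positive word qualifies as an element $wv$ of $\Gamma_k^l$ with $w$ trivial.
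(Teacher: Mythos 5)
Your proof is correct and follows essentially the same route as the paper: bound the indices of the $x_i$ appearing in the finitely many generators, observe that $P'$ then sits inside a single $\Gamma_k^l$, and apply the preceding corollary together with monotonicity of the measure. The only (harmless) difference is that you use $\Gamma_1^l$ with $w$ empty where the paper uses $\Gamma_0^L$; your choice is arguably safer since the main theorem is stated for positive $k$.
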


\begin{proof}
Letting $p_1, ..., p_n$ be generators of $P'$, express each as a word in the $x_0, x_1, x_2, ...$ generating set.  Let $L$ be the maximum index of the $x_i$ used to express the $p_j$; then $P'$ is a subset of the monoid generated by $x_0, x_1, ..., x_L$.  But this monoid is exactly $\Gamma_0^L$, which by the previous corollary has measure zero.  Thus, $\mu(P') = 0$.
\end{proof}

\end{document}